\tikzstyle{uStyle}=[shape = circle, minimum size = 2pt, inner sep =2.5pt, outer sep = 0pt, draw, fill=white]
\tikzstyle{myStyle}=[shape = circle, draw, fill=black, scale=0.5]
\newtheorem{lem}{Lemma}
\newtheorem{thm}{Theorem}
\newtheorem{claim}{Claim}
\newtheorem{obs}{Observation}
\newtheorem*{mainthm}{Main Theorem}
\newtheorem*{key-lem}{Key Lemma}
\newtheorem{conj}{Conjecture}
\theoremstyle{definition}
\newtheorem{definition}{Definition}
\newcommand{\vph}{\varphi}
\newcommand{\HH}{\mathcal{H}}
\newcommand{\A}{\mathcal{A}}
\newcommand{\B}{\mathcal{B}}
\newcommand{\C}{\mathcal{C}}
\newcommand{\D}{\mathcal{D}}
\newcommand{\T}{\mathcal{T}}
\newcommand{\U}{\mathcal{U}}
\newcommand{\W}{\mathcal{W}}
\newcommand{\I}{\mathcal{I}}
\newcommand{\G}{\mathcal{G}}
\newcommand{\mad}{\textrm{mad}}
\title{Equitable Coloring in 1-Planar Graphs}
\date{\today}
\author{Daniel W. Cranston\thanks{Department of Computer Science, Virginia Commonwealth University, Richmond, VA, USA; \texttt{dcranston@vcu.edu}} \and Reem Mahmoud\thanks{Department of Computer Science, Virginia Commonwealth University, Richmond, VA, USA; \texttt{mahmoudr@vcu.edu}}}
\begin{document}

\maketitle

\begin{abstract}
For every $r\ge13$, we show every 1-planar graph $G$ with $\Delta(G)\le r$ has an equitable $r$-coloring.
\end{abstract}

Let $[r]:=\{1,\dots,r\}$. A (proper) \emph{$r$-coloring} is a map $\vph:V(G)\to[r]$ such that $\vph(u)\neq\vph(v)$ for every $uv\in E(G)$. An $r$-coloring $\vph$ is \emph{equitable} if $|\vph^{-1}(i)|-|\vph^{-1}(j)|\le1$ for every $i,j\in[r]$; that is, color classes differ in size by at most 1. Equitable coloring was first introduced by Meyer in the seventies \cite{Meyer} motivated by its applications in scheduling problems. With many scheduling problems, we are often interested in assigning time slots to numerous tasks efficiently. That usually means assigning different time slots to tasks which cannot be performed simultaneously (finding a proper coloring), and further aiming for a nearly equal number of tasks performed in each time slot (finding an equitable coloring). 

One example mentioned by Meyer is assigning time slots to garbage collection routes so that routes which cannot be run on the same day receive different time slots, and the number of routes assigned to each time slot is as balanced as possible. Another example is assigning time slots to college classes so that no two college classes with common students are given the same time slot, and the classes are spread out as evenly as possible amongst the different time slots. Both examples can be translated into equitable coloring problems. For instance, the second problem is equivalent to equitably coloring the graph whose vertices are the college classes, where two vertices are adjacent if their corresponding classes share a student. 

As a result, equitable coloring has been studied extensively; 
see \cite{Lih-survey} for a survey. In particular, a lot of work has focused on studying the following two invariants. The \emph{equitable chromatic number}, $\chi_e(G)$, of a graph $G$ is the smallest integer $r$ such that $G$ admits an equitable $r$-coloring. The \emph{equitable chromatic threshold}, $\chi_e^*(G)$, of a graph $G$ is the smallest integer $r$ such that $G$ admits an equitable $k$-coloring for every $k\ge r$. The gap between $\chi_e(G)$ and $\chi_e^*(G)$ can be arbitrarily large. For example, $\chi_e(K_{7,7})=2$ while $\chi_e^*(K_{7,7})=8$ (in fact, $K_{7,7}$ admits equitable $k$-colorings for $k\in\{2,4,6\}$ and $k\ge8$, but not for $k\in\{3,5,7\}$). Meyer conjectured the following about the equitable chromatic number.

\begin{conj}[Equitable Coloring Conjecture]
\label{ECC}
If $G$ is connected, but not a complete graph or an odd cycle, then $G$ has an equitable $\Delta(G)$-coloring.
\end{conj}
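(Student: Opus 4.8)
The plan is to prove this Brooks-type strengthening of the Hajnal--Szemer\'edi theorem by induction on $|V(G)|$, exploiting the fact that a connected graph which is neither complete nor an odd cycle carries, by Brooks' theorem, a small amount of slack in its $\Delta(G)$-coloring that a general graph does not. Write $r:=\Delta(G)$ and, by padding with isolated vertices, assume $|V(G)|=rs$ so the target is a partition of $V(G)$ into $r$ independent classes $\A_1,\dots,\A_r$ of size exactly $s$. Since the Hajnal--Szemer\'edi theorem already yields an equitable $(r+1)$-coloring whenever $\Delta(G)\le r$, the whole problem is about spending one fewer color while preserving balance, and the hypotheses ``not complete, not an odd cycle'' are exactly what Brooks' theorem needs to guarantee that the extra color is dispensable.

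First I would dispose of the non-regular and low-connectivity cases directly, since these are where the slack is easiest to find. If $G$ has a vertex of degree less than $r$, or is disconnected, or has a cut vertex, one can split off a smaller piece, color the pieces inductively, and then re-balance the color-class sizes by a greedy matching between the two palettes; a vertex of degree $<r$ gives a free slot that lets the $(r+1)$-coloring machinery collapse to $r$ colors. This should reduce the problem to the case where $G$ is $2$-connected and $r$-regular but not complete.

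Next, for the main induction I would adapt the Kierstead--Kostochka augmenting framework. Delete a carefully chosen vertex $v$; after re-padding, induction gives an equitable $r$-coloring of $G-v$ with $r-1$ classes of size $s$ and one class of size $s-1$. To insert $v$ I would build the auxiliary move-digraph on $\{\A_1,\dots,\A_r\}$ in which $\A_i\to\A_j$ when some vertex of $\A_i$ may be moved to $\A_j$ and stay proper, then search for an augmenting path (or, in the resistant cases, an augmenting tree) of such moves that frees a slot in $v$'s intended class without unbalancing sizes. The Brooks slack enters precisely here: because $G$ is not complete, every vertex has two non-adjacent neighbors, which furnishes the extra movable vertex that lets the $r$-color search succeed where Hajnal--Szemer\'edi would be forced up to $r+1$.

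The hard part will be the dense, highly connected regular case in which the augmenting search stalls --- exactly the blow-up-of-clique configurations that make the gap between $\chi_e$ and $\chi_e^*$ genuine, as the $K_{7,7}$ example already foreshadows. One must show that any obstruction to augmenting forces a $K_{r+1}$ component or an odd-cycle component, contradicting the hypotheses; converting the \emph{local} failure of a move near $v$ into a \emph{global} forbidden subgraph is the crux, and it is precisely here that a complete proof of the conjecture has so far resisted. I would attack it with a discharging argument on the move-digraph itself, assigning charge to each stalled class and redistributing it along move-edges, aiming to show that a deficit of charge can be balanced only by the presence of a complete subgraph on $r+1$ vertices or an odd cycle --- so that if neither occurs, the augmenting move must exist and the induction closes.
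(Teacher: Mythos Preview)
The statement you are attempting to prove is not a theorem of the paper; it is Meyer's \emph{Equitable Coloring Conjecture}, which the paper states as an open problem (Conjecture~\ref{ECC}) and does not claim to resolve. The paper's Main Theorem is a much more restricted result about 1-planar graphs with $\Delta\ge 13$, and the proof exploits the sparsity of 1-planar graphs (via the edge bounds of Lemma~\ref{edges-lem}) in an essential way. There is therefore no ``paper's own proof'' of this statement to compare against.

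Your proposal is honest about this: you write that ``a complete proof of the conjecture has so far resisted'' and that the crux is converting local failure of augmentation into a global forbidden subgraph. This is exactly the gap. The Kierstead--Kostochka augmenting framework you describe does underlie the known partial results, but the discharging scheme you sketch in the last paragraph is not an argument---it is a hope. In particular, the claim that an obstruction to augmenting must force a $K_{r+1}$ or an odd-cycle component is precisely the unproved part of the conjecture, and nothing in your outline explains why the stalled configurations cannot be, say, a $K_{r,r}$-like structure (which is excluded only in the stronger Conjecture~\ref{EDCC}, not here) or something more exotic. So what you have written is a reasonable high-level survey of the known strategy, but it is not a proof, and the paper does not supply one either.
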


This conjecture, which mirrors Brooks' Theorem, was proposed a few years after Hajnal and Szemer\'{e}di \cite{Hajnal-Szemeredi} had already proved the following about the equitable chromatic threshold.

\begin{thm}[Hajnal-Szemer\'{e}di]
\label{HS-thm}
Every graph $G$ has an equitable $r$-coloring for each $r\ge (\Delta(G)+1)$.
\end{thm}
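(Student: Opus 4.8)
The plan is to reduce to a balanced version of the statement and then run an inductive vertex-swapping argument. First I would reduce to the case $r\mid|V(G)|$: given $G$ with $\Delta(G)\le r-1$ and $n:=|V(G)|$, write $n=rk+s$ with $0\le s<r$. If $s=0$, then an equitable $r$-coloring is exactly a partition of $V(G)$ into $r$ independent sets of size $k$. If $s>0$, pass to $G^{*}:=G\sqcup K_{r-s}$, which has $r(k+1)$ vertices and still satisfies $\Delta(G^{*})\le r-1$; any equitable $r$-coloring of $G^{*}$ must have all classes of size $k+1$, and since the clique $K_{r-s}$ meets $r-s$ distinct classes, deleting it leaves $r-s$ classes of size $k$ and $s$ of size $k+1$, i.e.\ an equitable $r$-coloring of $G$. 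So it suffices to prove: \emph{if $\Delta(G)\le r-1$ and $r\mid|V(G)|$, then $V(G)$ partitions into $r$ independent sets each of size $k:=|V(G)|/r$.}

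I would prove this by induction on $|E(G)|$; the base case $E(G)=\varnothing$ is immediate. For the step, pick $xy\in E(G)$ and apply induction to $G-xy$, obtaining independent classes $V_1,\dots,V_r$ of common size $k$. If $x,y$ lie in different classes we are done; otherwise $x,y\in V_1$, and we remove $y$ from $V_1$. As $y$ has at most $r-1$ neighbors and one of them ($x$) lies in $V_1$, at most $r-2$ of them lie in the $r-1$ classes $V_2,\dots,V_r$, so some $V_i$ with $i\ge2$ contains no neighbor of $y$; put $y$ there. The outcome is a \emph{nearly equitable} partition of $V(G)$: independent classes, exactly one (the \emph{big} class $V^{+}$) of size $k+1$, exactly one (the \emph{small} class $V^{-}$) of size $k-1$, and the rest of size $k$. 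It then remains to show that every nearly equitable partition can be converted into an equitable one; applied here, this closes the induction.

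To prove that, build the auxiliary digraph $D$ on $\{V_1,\dots,V_r\}$ with an arc $V_i\to V_j$ whenever some vertex of $V_i$ has no neighbor in $V_j$ (so it can be relocated from $V_i$ to $V_j$ keeping both classes independent). Suppose $D$ has a directed path $V^{+}=U_0\to U_1\to\cdots\to U_t=V^{-}$; for each $\ell<t$ pick a witness $w_\ell\in U_\ell$ with no neighbor in $U_{\ell+1}$, and relocate in order: move $w_0$ from $U_0$ to $U_1$, then $w_1$ from $U_1$ to $U_2$, and so on. Every class that receives some $w_{\ell-1}$ stays independent (it contained no neighbor of $w_{\ell-1}$), deleting vertices never breaks independence, and the $w_\ell$ are distinct as they come from distinct classes; the net effect is to reduce $|V^{+}|$ to $k$, raise $|V^{-}|$ to $k$, and return each intermediate $U_\ell$ to size $k$ --- an equitable partition.

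The remaining, decisive case is that $D$ has no directed path from $V^{+}$ to $V^{-}$. Let $\mathcal{A}$ be the set of classes reachable from $V^{+}$ in $D$ and $\mathcal{B}$ the rest, so $V^{-}\in\mathcal{B}$ and there are no arcs from $\mathcal{A}$ to $\mathcal{B}$; the latter means every vertex in an $\mathcal{A}$-class has a neighbor in every $\mathcal{B}$-class. With $a=|\mathcal{A}|$, $b=|\mathcal{B}|$, $a+b=r$, the $\mathcal{A}$-classes cover $ak+1$ vertices ($V^{+}$ plus $a-1$ classes of size $k$) and the $\mathcal{B}$-classes cover $bk-1$ vertices, and counting the edges between these two sets --- at least $b$ per $\mathcal{A}$-vertex, at most $r-1$ per $\mathcal{B}$-vertex --- gives $(ak+1)b\le(bk-1)(r-1)$, which fails for $b=1$ and so forces $b\ge2$. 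Settling $b\ge2$ is the technical core of the Hajnal--Szemer\'{e}di argument: following their approach, one uses the neighborhood condition to rearrange the vertices inside $\mathcal{A}\cup\{V^{-}\}$ into a new nearly equitable partition for which a suitable nonnegative integer parameter (e.g.\ the number of reachable classes, or the number of edges between the two sets above) strictly decreases; iterating, one arrives at a nearly equitable partition whose digraph has a $V^{+}$-to-$V^{-}$ path, and the cascade above finishes. I expect this rearrangement step to be the main obstacle; the reduction, the induction, and the path-cascading are routine by comparison.
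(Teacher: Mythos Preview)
The paper does not prove this theorem at all: it is stated as background, attributed to Hajnal and Szemer\'{e}di via the citation \cite{Hajnal-Szemeredi}, and then used only as context. There is therefore no ``paper's own proof'' to compare your attempt against.

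Evaluating your proposal on its own terms: the reduction to $r\mid |V(G)|$ by adjoining a disjoint $K_{r-s}$, the induction on $|E(G)|$ that produces a nearly equitable partition, the auxiliary digraph $D$, and the cascade along a directed $V^{+}\to V^{-}$ path are all standard and correctly set up. The genuine gap is your final paragraph. You correctly isolate the decisive case (no directed path from $V^{+}$ to $V^{-}$ in $D$), and your edge count does rule out $b=1$. But from that point on you do not give an argument; you give a description of what a successful argument would look like: ``one uses the neighborhood condition to rearrange the vertices \dots\ into a new nearly equitable partition for which a suitable nonnegative integer parameter \dots\ strictly decreases.'' No specific parameter is named, no rearrangement rule is stated, and no monotonicity is verified. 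This is exactly the step that carries the entire weight of the Hajnal--Szemer\'{e}di theorem; the original proof is long, and even the short modern proofs (e.g.\ Kierstead--Kostochka) require a carefully engineered potential function together with a nontrivial swapping lemma inside the $\mathcal{A}$-side. Your edge inequality $(ak+1)b\le(bk-1)(r-1)$, while true, does not by itself drive any such descent. As written, the proposal is a correct frame around a missing core.
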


The bound in Theorem~\ref{HS-thm} is sharp for both complete graphs $K_{\Delta+1}$ and odd cycles, since they admit no $\Delta$-colorings, and also sharp for complete bipartite graphs $K_{\Delta,\Delta}$ with $\Delta$ odd, since they admit no equitable $\Delta$-colorings. Chen, Lih, and Wu \cite{Chen-Lih-Wu} conjectured a stronger chromatic-threshold version of Meyer's conjecture.

\begin{conj}[Equitable $\Delta$-Coloring Conjecture]
\label{EDCC}
If $G$ is connected, but not a complete graph, a complete bipartite graph, or an odd cycle, then $G$ has an equitable $r$-coloring for every $r\ge\Delta(G)$. 
\end{conj}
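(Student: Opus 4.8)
The plan is to use Theorem~\ref{HS-thm} to collapse the entire threshold statement to a single tight case. For every $r\ge\Delta(G)+1$ the Hajnal--Szemer\'edi theorem already supplies an equitable $r$-coloring, with no hypothesis on $G$ at all, so Conjecture~\ref{EDCC} is \emph{equivalent} to the assertion that a connected graph $G$ that is neither complete, nor complete bipartite, nor an odd cycle admits an equitable $\Delta(G)$-coloring. This single boundary value carries all of the content, and the first step is to record this reduction and fix $\Delta:=\Delta(G)$ and $n:=|V(G)|$, so that an equitable $\Delta$-coloring is exactly a proper map $\vph\colon V(G)\to[\Delta]$ whose class sizes all lie in $\{\lfloor n/\Delta\rfloor,\lceil n/\Delta\rceil\}$.

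Existence (ignoring balance) is free: since $G$ is connected and is neither complete nor an odd cycle, Brooks' theorem gives $\chi(G)\le\Delta$, so proper maps $V(G)\to[\Delta]$ are plentiful. The strategy is then a potential-reduction argument. Among all proper $\Delta$-colorings $\vph$, choose one minimizing the imbalance potential $\Phi(\vph):=\sum_{i\in[\Delta]}\bigl(|\vph^{-1}(i)|-n/\Delta\bigr)^2$. If the minimizer is equitable we are done, so assume not: there is a \emph{heavy} class $C_a$ with $|C_a|\ge\lceil n/\Delta\rceil+1$ and a \emph{light} class $C_b$ with $|C_b|\le\lfloor n/\Delta\rfloor-1$. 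I would then try to strictly decrease $\Phi$, first by recoloring a single vertex of $C_a$ into color $b$, and when that is blocked, by an alternating sequence of Kempe swaps along two-colored chains that ferries an excess vertex from $C_a$ toward $C_b$. This is precisely the recoloring machinery underlying the short proofs of Theorem~\ref{HS-thm}: vertices are sorted into ``movable'' and ``non-movable,'' and one searches an auxiliary digraph on the color classes for an augmenting sequence of moves.

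The entire difficulty is concentrated in the case where \emph{no} $\Phi$-decreasing move exists. At the tight value $r=\Delta$ there is essentially no slack: a vertex $v$ of degree $\Delta$ whose neighborhood already meets every color cannot be recolored at all, and a blocked Kempe chain between two classes records a near-complete bipartite pattern between them. The core of a proof must therefore classify the rigid, move-free configurations and show that each one forces $G$ \emph{globally} to be $K_{\Delta+1}$, $K_{\Delta,\Delta}$ with $\Delta$ odd, or an odd cycle, contradicting the hypotheses. This is the step I expect to be the main obstacle, and it is exactly why the conjecture remains open: local blocking patterns do not obviously propagate to one of the three global exceptions, and dense graphs, in which every vertex already sees all $\Delta$ colors, leave the potential argument no room to maneuver.

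For this reason a realistic line of attack is to first secure the conjecture on structurally restricted families---bounded degeneracy, bounded maximum average degree, or topologically constrained classes such as the $1$-planar graphs studied here---where a discharging argument can bound the number and interaction of the rigid configurations and thereby complete the move-free analysis. The dense regime (say $\Delta\ge cn$) would be handled separately, for instance by passing to the sparse complement and balancing large independent sets directly. The two analyses would then have to be stitched together, and producing a single recoloring invariant that strictly improves across \emph{both} regimes is, in my view, the key missing ingredient for a full proof.
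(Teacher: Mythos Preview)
The statement you are addressing is \emph{Conjecture}~\ref{EDCC}, not a theorem: the paper does not prove it and does not claim to. It is recorded as an open problem, and the paper's contribution is the \hyperref[main-thm]{Main Theorem}, which verifies only the special case of $1$-planar graphs with $\Delta(G)\le r$ for each $r\ge 13$. So there is no ``paper's proof'' to compare your proposal against.

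As for the proposal itself, you are not actually offering a proof, and you say so explicitly: the reduction via Theorem~\ref{HS-thm} to the single value $r=\Delta(G)$ is correct and standard, and the existence of some proper $\Delta$-coloring via Brooks is fine, but the heart of the argument---showing that a $\Phi$-minimizing coloring with no improving move forces $G$ to be one of the three excluded graphs---is left as ``the step I expect to be the main obstacle.'' That is not a gap in an argument; it \emph{is} the argument. Kempe-chain and potential-reduction ideas of exactly this flavor are what underlie the known partial results you allude to, and the reason Conjecture~\ref{EDCC} is still open is precisely that no one knows how to close this step in general. Your final paragraphs correctly identify this and even point toward the kind of restricted-class result the paper actually proves, so the write-up is better read as a survey of the landscape than as a proof attempt.
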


The Equitable $\Delta$-Coloring Conjecture has been proved for various classes of graphs. Lih and Wu \cite{Lih-Wu} proved it for bipartite graphs. Kostochka and Nakprasit \cite{Kostochka-Nakprasit1} proved it for graphs $G$ with $\mad(G)\le\frac{\Delta(G)}{5}$ and $\Delta(G)\ge46$.
They also proved it \cite{Kostochka-Nakprasit2} for $d$-degenerate graphs $G$ with $d\le\frac{\Delta(G)-1}{14}$. Tian and Zhang \cite{Tian-Zhang} proved it for pseudo-outerplanar graphs. 
Yap and Zhang \cite{Yap-Zhang1} proved it for outerplanar graphs and \cite{Yap-Zhang2} for planar graphs $G$ with $\Delta(G)\ge13$. Nakprasit \cite{Nakprasit} improved the hypothesis of Yap and Zhang for planar graphs $G$ to $\Delta(G)\ge9$. More recently, Kostochka, Lin, and Xiang \cite{Kostochka-Lin-Xiang} improved the hypothesis again to $\Delta(G)\ge8$. In the same paper, they extended Nakprasit's result for graphs $G$ with $\Delta(G)\ge9$ from only planar graph to include all such graphs embeddable into any surface with nonnegative Euler characteristic. Finally, Zhang \cite{Zhang} proved the conjecture for 1-planar graphs $G$ with $\Delta(G)\ge17$ (in fact, he proved it for a wider class of graphs $\G$ where $|E(H)|\le 4|V(H)|-8$ for every subgraph $H$ of a graph $G\in\G$). Using techniques similar to those in \cite{Kostochka-Lin-Xiang}, we strengthen Zhang's result as follows.

\begin{mainthm}
\label{main-thm}
If $r\ge13$ and $G$ is a 1-planar graph with $\Delta(G)\le r$, then $G$ has an equitable $r$-coloring.
\end{mainthm}

The only place where we use 1-planarity is to establish the edge bounds in Lemma~\ref{edges-lem} below. Hence, the \hyperref[main-thm]{Main Theorem} is true, more generally, for every graph $G$ such that $G$ and all of its subgraphs satisfy the edge bounds in Lemma~\ref{edges-lem}. Our result does not immediately extend to the wider class of graphs $\G$ considered by Zhang (defined above) since there is no clear edge bound for the class of bipartite graphs in $\G$. However, it is only in Claims~\ref{A-B-edges} and \ref{strong-comp} that our proof uses the edge bound from Lemma~\ref{edges-lem} for bipartite 1-planar graphs. This suggests that it may be possible to adapt our proof to work for all graphs in $\G$.


\section{Preliminaries}\label{prelims} 

Let \emph{$\Delta(G)$} denote the maximum degree of a graph $G$. Denote by \emph{$E(A,B)$} the set of edges between disjoint vertex subsets $A$ and $B$. A \emph{$v,w$-path} is a path from $v$ to $w$. A graph $G$ is \emph{$d$-degenerate} if every subgraph $H$ of $G$ contains a vertex of degree at most $d$; equivalently, there exists a \emph{degeneracy ordering} $\sigma$ of $V(G)$ such that every $v\in\sigma$ has at most $d$ neighbors later in $\sigma$. A \emph{directed graph (digraph)} $\D$ is a graph with directed edges called \emph{arcs}. Note that digraphs may contain parallel edges of opposite directions; each pair of such edges is a \emph{digon}. For an arc $vw$ in a digraph, $v$ is an \emph{in-neighbor} of $w$ and $w$ is an \emph{out-neighbor} $v$. A \emph{sink} is a non-isolated vertex whose neighbors are all in-neighbors. A component in a digraph is \emph{strong} or \emph{strongly connected} if there is a directed path from $v$ to $w$ for every $v,w\in V(\D)$. A subgraph $H$ of a (di)graph $G$ is \emph{spanning} if $V(H)=V(G)$. A \emph{tree} is a connected acyclic graph. An \emph{in-tree} $\I$ is a directed tree with a sink $s$ such that there is a directed $v,s$-path for every $v\in V(\I)$. A graph $G$ is \emph{1-planar} if it can be drawn in the plane such that each edge crosses at most one other edge.

\begin{lem}[\cite{Karpov}]
\label{edges-lem}
If $G$ is a 1-planar graph, then $|E(G)|\le 4|V(G)|-8$. Moreover, if $G$ is a bipartite 1-planar graph with $|V(G)|\ge 3$, then $|E(G)|\le 3|V(G)|-6$.
\end{lem}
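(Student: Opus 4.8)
The plan is to bound the number of crossings in an optimal drawing and combine that bound with the ordinary planar edge bounds. Fix a 1-planar drawing $D$ of $G$ minimizing the number of crossings, and write $n=|V(G)|$, $m=|E(G)|$; I may assume $n\ge 3$ and dispose of the few-vertex cases directly. First I would record the standard properties of a crossing-minimal drawing: no edge crosses itself, two edges cross at most once, and no two \emph{adjacent} edges cross (otherwise a local uncrossing near their common endpoint removes a crossing without creating one, contradicting minimality while preserving 1-planarity). Consequently the two edges forming any crossing have four \emph{distinct} endpoints.

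Next I would planarize. Let $c$ be the number of crossings and let $H$ be the plane graph obtained from $D$ by placing a new vertex at each crossing point; each of the $2c$ crossed edges splits into two arcs, so $|V(H)|=n+c$ and $|E(H)|=m+2c$. Each crossing vertex has degree exactly $4$, and, crucially, both of its crossing edges run between \emph{real} (original) vertices, so every neighbor of a crossing vertex in $H$ is a real vertex. Hence the crossing vertices form an independent set, and since the two edges defining a crossing share no endpoint, each crossing vertex has four \emph{distinct} real neighbors.

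The heart of the argument is the bound $c\le n-2$. For this I would consider the bipartite subgraph $B\subseteq H$ consisting of all arcs joining a crossing vertex to a real vertex. Then $B$ is a simple (by the four-distinct-neighbors property) bipartite plane graph in which every crossing vertex has degree $4$, so $|E(B)|=4c$. Applying the bipartite planar edge bound $|E(B)|\le 2|V(B)|-4$ together with $|V(B)|\le n+c$ gives $4c\le 2(n+c)-4$, that is, $c\le n-2$.

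Finally I would delete one edge from each crossing pair; since the $2c$ crossed edges are pairwise distinct, this removes exactly $c$ edges and yields a plane, hence simple planar, subgraph $G_0$ with $m-c$ edges. The planar bound gives $m-c\le 3n-6$, so $m\le 3n-6+c\le 3n-6+(n-2)=4n-8$. When $G$ is bipartite, $G_0$ is bipartite planar, so $m-c\le 2n-4$ and therefore $m\le 2n-4+(n-2)=3n-6$, as required. I expect the main obstacle to be the careful justification of the structural properties of the crossing-minimal drawing, in particular that we may assume no two adjacent edges cross while retaining a valid 1-planar drawing: everything downstream, namely the simplicity of $B$, the four distinct endpoints, and the edge count $|E(B)|=4c$, relies on this, whereas the remaining steps are routine applications of Euler-type edge bounds.
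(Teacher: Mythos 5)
Your proof is correct, and its outer skeleton is the same as the paper's: planarize the drawing, prove the crossing bound $c\le n-2$, delete one edge from each crossing pair, and finish with the planar bound $3n-6$ (resp.\ the bipartite planar bound $2n-4$) plus $c$. Where you genuinely diverge is in how the key inequality $c\le n-2$ is obtained. The paper extends the planarization to a \emph{maximal planar multigraph} $G''$ (allowing parallel edges but forbidding 2-faces), counts its $2(n+c)-4$ faces via Euler's formula, and observes that each crossing vertex is incident to four 3-faces with no face shared between crossings, giving $4c\le 2(n+c)-4$. You instead extract the bipartite incidence subgraph $B$ between crossing vertices and real vertices, note it is a simple plane bipartite graph with exactly $4c$ edges and at most $n+c$ vertices, and apply $|E(B)|\le 2|V(B)|-4$ to get the very same inequality. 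Your route buys two things: it avoids the triangulation-of-a-multigraph technicalities that the paper relegates to a footnote, and it makes explicit the hypothesis that both arguments secretly need, namely that in a crossing-minimal drawing no two adjacent edges cross (otherwise a crossing vertex can have a repeated neighbor, which would break the simplicity of your $B$ and, in the paper's version, could force an untriangulatable 2-face from an empty lens). The cost is that you must justify those crossing-minimality facts, including that the uncrossing operation preserves 1-planarity — which it does, since in a 1-planar drawing the two arcs being swapped carry no other crossings. One caveat inherited from the statement itself rather than from your argument: the bound $4n-8$ fails for $n\le 2$ (e.g.\ $K_2$), so ``disposing of the few-vertex cases directly'' should really be ``assuming $n\ge 3$,'' just as the paper's Euler-formula argument implicitly does.
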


\begin{proof}
Let $G$ be a 1-planar graph (not necessarily bipartite). We first show that the number $c(G)$ of crossings in $G$ (i.e. points at which edges cross) is at most $|V(G)|-2$. To see this, create an $(|V(G)|+c(G))$-vertex graph $G'$ from $G$ by replacing crossings in $G$ with vertices. Since no two crossings in $G$ are adjacent, it is possible to form a maximal planar graph $G''$ from $G'$ by adding edges between vertices of $G$.\footnote{To be precise, we allow $G''$ to have parallel edges, but no 2-faces.  Fortunately, Euler's formula still holds for all such planar multigraphs.} By Euler's formula, the number of faces in $G''$ is $2|V(G'')|-4=2(|V(G)|+c(G))-4$. Since every crossing of $G$ is incident to four 3-faces in $G''$ and no two crossings share a face, $2(|V(G)|+c(G))-4\ge 4c(G)$ which implies $c(G)\le |V(G)|-2$.

Now we delete an edge from each crossing of $G$. By Euler's formula, this gives a planar graph $H$ with at most $3|V(G)|-6$ edges. So $|E(G)|=|E(H)|+c(G)\le 3|V(G)|-6+|V(G)|-2$. This proves the first statement. 

If $G$ is also bipartite, then $H$ is a bipartite planar graph; so by Euler's formula $H$ has at most $2|V(G)|-4$ edges. Thus, $|E(G)|\le 2|V(G)|-4+|V(G)|-2$. This proves the second statement.
\end{proof}

\begin{lem}
\label{mindeg-lem}
If $G$ is a 1-planar graph, then $G$ is 7-degenerate. In particular, $\delta(G)\le7$. 
\end{lem}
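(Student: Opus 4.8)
The plan is to obtain $7$-degeneracy as an immediate consequence of the edge bound in Lemma~\ref{edges-lem}, so the whole argument is a short averaging computation together with one hereditary observation. First I would note that $1$-planarity is hereditary: restricting a $1$-planar drawing of $G$ to any subgraph $H$ yields a $1$-planar drawing of $H$. Hence every subgraph $H$ of $G$ is itself $1$-planar, and so $|E(H)|\le 4|V(H)|-8$ by Lemma~\ref{edges-lem}.

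Next I would show that every subgraph $H$ contains a vertex of degree at most $7$, which is precisely the definition of $7$-degeneracy. If $|V(H)|\le 8$, this is immediate, since each vertex of $H$ has degree at most $|V(H)|-1\le 7$. If $|V(H)|\ge 9$, then applying Lemma~\ref{edges-lem} to $H$ gives
\[
\sum_{v\in V(H)}\deg_H(v)=2|E(H)|\le 8|V(H)|-16<8|V(H)|,
\]
so by averaging some vertex of $H$ has degree at most $7$. Since $H$ was arbitrary, $G$ is $7$-degenerate, and the ``in particular'' clause follows by taking $H=G$ in this argument (equivalently, any $7$-degenerate graph has a vertex of minimum degree witnessing $\delta(G)\le 7$).

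I do not anticipate any genuine obstacle: the only point requiring a little care is that the bound $4|V(H)|-8$ is vacuous, and in fact negative, for subgraphs on very few vertices, which is exactly why the case $|V(H)|\le 8$ is dispatched separately using the trivial degree bound rather than Lemma~\ref{edges-lem}. Everything else is a one-line degree-sum estimate.
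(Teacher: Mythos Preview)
Your proof is correct and follows essentially the same averaging argument as the paper: apply the edge bound of Lemma~\ref{edges-lem} to an arbitrary subgraph $H$ and conclude that the average degree is below $8$. You are in fact slightly more careful than the paper, making explicit both that $1$-planarity is hereditary (so Lemma~\ref{edges-lem} applies to $H$) and that very small subgraphs need a trivial separate argument.
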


\begin{proof}
The second statement follows from the first, by the definition of degeneracy. Let $H$ be a subgraph of $G$. Lemma~\ref{edges-lem} gives $|E(H)|\le 4|V(H)|-8$. So the average vertex degree in $H$ is $\frac{\sum_{v\in V(H)}d(v)}{|V(H)|}=\frac{2|E(H)|}{|V(H)|}\le \frac{2(4|V(H)|-8)}{|V(H)|}<8$. Thus, $H$ has some vertex with degree at most 7.
\end{proof}

The first bound in Lemma~\ref{edges-lem} is sharp.\footnote{Regarding the second bound, when $|V(G)|\ge 4$ Karpov~\cite{Karpov} proved the stronger form $|E(G)|\le 3|V(G)|-8$, which is sharp infinitely often.} For example, consider an embedding of the 3-cube $Q_3$ as two nested squares whose corresponding vertices are adjacent. Adding both diagonals to each 4-face of $Q_3$ gives a 1-planar graph with 8 vertices and $24=4(8)-8$ edges. Moreover, it is possible to construct 1-planar graphs $G$ with $\delta(G)=7$ by adding diagonals to 4-faces in 4-regular planar graphs that only contain 3-faces and 4-faces\footnote{Consider the  rhombicuboctahedron graph which has eight 3-faces and eighteen 4-faces. Each vertex is incident to three 4-faces and one 3-face and has degree 4. Adding both diagonals to every 4-face gives a 7-regular 1-planar graph.  In fact, this construction can be extended to give an infinite family of sharpness examples.}. So, the bound in Lemma~\ref{mindeg-lem} is also sharp. 

The following lemma implies that it suffices to prove the \hyperref[main-thm]{Main Theorem} for graphs whose order is divisible by the number of colors. Such graphs are easier to work with since their equitable colorings have color classes of equal size.

\begin{lem}
\label{divisibility-lem}
If the \hyperref[main-thm]{Main Theorem} is true for every 1-planar graph $G$ with $|V(G)|$ divisible by $r$, then it is true for every 1-planar graph. 
\end{lem}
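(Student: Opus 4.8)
The plan is to case-split on the residue $s:=|V(G)|\bmod r$. We may assume $1\le s\le r-1$, since otherwise $r\mid|V(G)|$ and there is nothing to prove. In each case I will exhibit a 1-planar graph $G'$ with $\Delta(G')\le r$ and $r\mid|V(G')|$, apply the hypothesis to obtain an equitable $r$-coloring of $G'$ (all of whose color classes then have one common size), and transfer that coloring to $G$.

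First, when $r-s\le 6$, I would take $G':=G\sqcup K_{r-s}$. Since $K_6$ (hence every $K_j$ with $j\le 6$) is 1-planar and a disjoint union of 1-planar graphs is 1-planar, $G'$ is 1-planar; its maximum degree is $\max(\Delta(G),\,r-s-1)\le r$; and $|V(G')|=|V(G)|+(r-s)\equiv 0\pmod r$. Let $\varphi$ be an equitable $r$-coloring of $G'$, so every class has size $m:=|V(G')|/r$. Because $K_{r-s}$ is a clique, its $r-s$ vertices receive $r-s$ distinct colors; deleting them leaves $r-s$ classes of size $m-1$ and $s$ of size $m$, so $\varphi$ restricted to $G$ is an equitable $r$-coloring of $G$.

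Second, when $r-s\ge 7$ (equivalently $s\le r-7$), I would instead delete and then re-insert a well-chosen $s$-set. By Lemma~\ref{mindeg-lem}, $G$ is 7-degenerate; let $x_1,\dots,x_n$ be a degeneracy ordering of $V(G)$ and put $S:=\{x_1,\dots,x_s\}$. Then each $x_i\in S$ has at most $7$ neighbors among $x_{i+1},\dots,x_n$, hence at most $7$ neighbors in $G-S$. The graph $G-S$ is 1-planar with $\Delta(G-S)\le r$ and $r\mid|V(G-S)|$, so the hypothesis supplies an equitable $r$-coloring $\varphi$ of $G-S$ with all classes of size $q:=|V(G-S)|/r$. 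For $v\in S$ let $L(v):=[r]\setminus\{\varphi(w):w\in N_{G-S}(v)\}$, so $|L(v)|\ge r-7$. It then suffices to choose pairwise distinct colors $c(v)\in L(v)$ for $v\in S$: the coloring $\varphi\cup c$ is proper (distinctness of $c$ handles the edges inside $S$, the lists handle the edges from $S$ to $G-S$) and equitable (exactly $s$ classes grow to size $q+1$, the rest stay at $q$). Such a choice is a system of distinct representatives for $(L(v))_{v\in S}$, and Hall's condition holds trivially: every nonempty $T\subseteq S$ satisfies $\bigl|\bigcup_{v\in T}L(v)\bigr|\ge r-7\ge s\ge|T|$, using $s\le r-7$.

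The only thing needing care is the bookkeeping that the two regimes together cover every residue: the clique-padding case needs $r-s\le 6$ so that $K_{r-s}$ stays 1-planar, the deletion case needs $s\le r-7$ so that the Hall bound holds, and $r\ge 13$ is exactly what makes $\{1,\dots,r-7\}\cup\{r-6,\dots,r-1\}=\{1,\dots,r-1\}$. The single genuine inequality is $r-7\ge s$ in the second case, which is precisely its defining hypothesis; it is also worth a remark that if $|V(G)|<r$ then $G-S$ may be empty, but the deletion argument still goes through (then $\varphi$ is the empty coloring, each $L(v)=[r]$, and the distinct representatives are trivial to pick).
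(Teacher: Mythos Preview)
Your proof is correct and follows essentially the same approach as the paper: the identical two-case split (pad with $K_{r-s}$ when $r-s\le 6$; peel off the first $s$ vertices of a degeneracy ordering when $s\le r-7$), with the only difference being that in the deletion case you invoke Hall's theorem to select distinct colors, whereas the paper colors the removed vertices greedily in reverse degeneracy order. These two arguments are interchangeable here; your Hall verification and the paper's greedy count both boil down to the same inequality $7+(s-1)\le r-1$.
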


\begin{proof}
Assume the \hyperref[main-thm]{Main Theorem} is true for every 1-planar graph whose order is divisible by $r$. Let $G$ be a 1-planar graph with $|V(G)|=rs-t$ for some $s\ge1$ and for $0<t<r$.

If $t\le 6$, then let $G'$ be the disjoint union of $G$ and $K_t$ (it is straightforward to verify that $K_t$ is 1-planar). Now $G'$ is 1-planar and $|V(G')|=rs$. By assumption, $G'$ admits an equitable $r$-coloring $\vph'$ where each color class has size $s$. Since all vertices in this $K_t$ receives distinct colors, the restriction $\vph$ of $\vph'$ to $G$ is an equitable $r$-coloring of $G$.

If instead $t\ge7$, then recall that $G$ is 7-degenerate, so has a degeneracy ordering $\sigma$, by Lemma~\ref{mindeg-lem}. Let $\sigma:=v_1,v_2,\dots,v_{|V(G)|}$, $R:=\{v_1,v_2,\dots,v_{r-t}\}$, $G':=G\setminus R$, and $\sigma'$ be the restriction of $\sigma$ to $G'$. Now $G'$ is 1-planar and $|V(G')|=r(s-1)$. By assumption, $G'$ admits an equitable $r$-coloring $\vph'$ where each class has size $s-1$. We color the vertices of $R$ in the reverse order starting from $v_{r-t}$ and ending with $v_1$. Observe that each $v_i$ in $R$ has at most 7 neighbors later in $\sigma'$. Further, at the time of coloring $v_i$, the number of colored vertices in $R$ is $r-t-i\le r-7-1=r-8$ since $i\ge1$ and $t\ge7$. Thus, there exists a color for $v_i$ distinct from the colors of its neighbors in $\sigma'$ and the colored vertices in $R$. Thus, $G$ has an $r$-equitable coloring. 
\end{proof}

As mentioned in the introduction, we adopt many of the ideas used in the recent paper of Kostochka, Lin, and Xiang \cite{Kostochka-Lin-Xiang}. We first recall some of the definitions and concepts from that paper along with some new ones. Following Definition~\ref{digraph-def}, we will give a brief sketch of the proof of the \hyperref[main-thm]{Main Theorem}, and also  provide intuition for some terms in this definition.

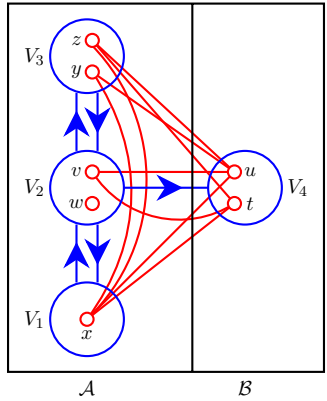
\begin{figure}[!h]
\centering
\begin{tikzpicture}[scale=0.7,every node/.style={scale=0.7}]
\begin{scope}
\draw[thick] (0.1,1.3) edge[red] (2.8,-1.2) (0.1,1.3) edge[red] (2.8,-1.8) (0.1,-1.2) edge[red] (2.8,-1.2) (0.1,-1.2) edge[red, bend right=45] (2.8,-1.8) (0,-4) edge[red] (2.8,-1.2) (0,-4) edge[red] (2.8,-1.8) (2.8,-1.2) edge[red] (0.1,0.7) (0,-4) edge[red, bend right=35] (0.1,0.7) (0,-4) edge[red, bend right=45] (0.1,1.2);
\draw[thick] (0.2,0.3) edge[blue, decoration={markings, mark=at position 0.7 with {\arrow{Stealth[length=10pt,width=10pt]}}},postaction={decorate}] (0.2,-0.8) (-0.2,-0.8) edge[blue, decoration={markings, mark=at position 0.7 with {\arrow{Stealth[length=10pt,width=10pt]}}},postaction={decorate}] (-0.2,0.3) (0.2,-2.2) edge[blue, decoration={markings, mark=at position 0.7 with {\arrow{Stealth[length=10pt,width=10pt]}}},postaction={decorate}] (0.2,-3.3) (-0.2,-3.3) edge[blue, decoration={markings, mark=at position 0.7 with {\arrow{Stealth[length=10pt,width=10pt]}}},postaction={decorate}] (-0.2,-2.2) (0.7,-1.5) edge[blue, decoration={markings, mark=at position 0.7 with {\arrow{Stealth[length=10pt,width=10pt]}}},postaction={decorate}] (2.3,-1.5);
\draw[thick] (0,-4) node[uStyle, draw=red] {} (0,-4.3) node {$x$} (0.1,-1.8) node[uStyle, draw=red] {} (0.1,-1.2) node[uStyle, draw=red] {} (-0.2,-1.2) node {$v$} (-0.2,-1.8) node {$w$} (0.1,0.7) node[uStyle, draw=red] {} (0.1,1.3) node[uStyle, draw=red] {} (-0.2,0.7) node {$y$} (-0.2,1.3) node {$z$} (2.8,-1.8) node[uStyle, draw=red] {} (2.8,-1.2) node[uStyle, draw=red] {} (3.1,-1.2) node {$u$} (3.1,-1.8) node {$t$} (-1,1) node {$V_3$} (-1,-1.5) node {$V_2$} (-1,-4) node {$V_1$} (4,-1.5) node {$V_4$} (0,-5.3) node {$\A$} (3,-5.3) node {$\B$};
\draw[thick, draw=blue] (0,1) circle (0.7cm) (0,-1.5) circle (0.7cm) (0,-4) circle (0.7cm) (3,-1.5) circle (0.7cm);
\draw[thick] (-1.5,2) rectangle (4.5,-5) (2,2) -- (2,-5);
\end{scope}
\end{tikzpicture}
\caption{An example of $G$ (red) and $\HH$ (blue) with $r=4$, $s=2$, and $a=3$. Classes $V_1,V_2,V_3$ are accessible while $V_4$ is not. The vertex $w$ is movable to $V_1$, so it is a witness of the arc $V_2V_1$ with $V_2$ and $V_1$ being its home and target classes, respectively. The class $V_3$ is terminal since $V_1$ is reachable from $V_2$. The vertex $v$ is a solo vertex with nice solo neighbors $u$ and $t$. The vertex $y$ is ordinary since $z$ is movable to $V_2$. The class $V_4\in\B'(w)$. The value of $f_\emptyset(v)=f_\emptyset(v,u)+f_\emptyset(v,t)=1+1=2$.}
\label{definition-fig}
\end{figure}

\begin{definition}
\label{digraph-def}
Fix $r,s\in \mathbb{N}$. Let $G$ be a graph with $|V(G)|=rs-1$ and let $\vph$ be an $r$-coloring of $G$ with color classes $V_1,V_2,\dots,V_r$ such that $|V_1|=s-1$ and $|V_i|=s$ for every $i\in\{2,\ldots,r\}$. Let $\HH$ be the digraph whose vertex set is the color classes of $\vph$, and for distinct $i$ and $j$, the arc $V_iV_j$ is in $\HH$ if $N_{V_j}(v)=\emptyset$ for some $v\in V_i$. We call $v$ a \emph{witness} to the arc $V_iV_j$ and say $v$ is \emph{movable} to $V_j$. Further, we call $V_i$ and $V_j$ the \emph{home} and \emph{target} classes of $v$, respectively, in regards to this arc; see Figure~\ref{definition-fig}.

To avoid confusion, we call vertices of $\HH$ \emph{classes}. A class $V_i$ is \emph{reachable from} another class $V_j$ if there exists a $V_j,V_i$-path in $\HH$. A class $V_i$ is \emph{accessible} if $V_1$ is reachable from $V_i$. Observe that $V_1$ is trivially accessible. A class $V_i$ is \emph{terminal} if it is accessible, and every other accessible class $V_j$ remains accessible in $\HH\setminus V_i$; see Figure~\ref{definition-fig}. Clearly, $V_1$ is not terminal. Denote by $\A$, $\T$, and $\B$ the sets of accessible, terminal, and non-accessible classes, respectively. Let $a:=|\A|$ and $b:=|\B|$, and let $A:=\cup_{i=1}^aV_i$ and $B:=\cup_{i=a+1}^rV_i$. By symmetry, let $V_1,\dots,V_a$ be the classes in $\A$.

A vertex $v\in A$ with $v\in V_i$ is \emph{ordinary} if either (i) $a=1$ or (ii) $V_i\in\T$ and there exists $u\in V_i\setminus\{v\}$ which is movable to some other class in $\A$. Fix $v\in V_i$ and $u\in V_j$ with $V_i\in\A$ and $V_j\in \B$. We say $u$ is a \emph{solo neighbor} of $v$ if $v$ is the only neighbor of $u$ in $V_i$. Further, we say $v$ is a \emph{solo vertex} if it has at least one solo neighbor (in $B$). Let $Q(v)$ be the set of solo neighbors of $v$ and let $q(v):=|Q(v)|$. A solo neighbor $u_1$ of $v$ is \emph{nice} if there exists $u_2\in Q(v)$ such that $u_1u_2\notin E(G)$. By definition, $u_2$ is also nice. Let $Q'(v)$ be the set of nice solo neighbors of $v$ and let $q'(v):=|Q'(v)|$. Finally, for every $v\in A$, let $\B'(v)$ be the set of classes in $\B$ that have no neighbors of $v$; see Figure~\ref{definition-fig}.

Fix $V_i\in\A$ and $\W\subseteq \B$ (possibly $\W=\emptyset$) and let $W:=\cup_{V_j\in\W}V_j$. For every $v\in V_i$ and $w\in B$ with $vw\in E(H)$, let $f_{\W}(v,w):=\frac{1}{|N_{V_i}(w)|}$ if $w\in B\setminus W$ and $f_{\W}(v,w):=\frac{0.5}{|N_{V_i}(w)|}$ if $w\in W$. For every $v\in A$, let $f_{\W}(v):=\sum_{w\in N_B(v)}f_{\W}(v,w)$. If $W=\{V_i\}$ for some $i$, then we often write $f_{V_i}$ for $f_{\{V_i\}}$.
\end{definition}

\begin{obs}
\label{weight-sum}
By the definition of $\B$, every $w\in B$ has at least one neighbor in $V_i$ for every $V_i\in\A$. Thus, for every $V_i\in\A$ and $\W\subseteq\B$ with $W=\cup_{V_j\in\W}V_j$, we have $\sum_{v\in V_i}f_{\W}(v)=\sum_{w\in B, vw\in E(H)}f_{\W}(v,w)=|B\setminus W|+0.5|W|=s(|\B\setminus\W|+0.5|\W|)$.
\end{obs}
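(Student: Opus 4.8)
The plan is to separate the statement into its two assertions: first the structural fact that every $w\in B$ has a neighbor in each accessible class, and then the arithmetic identity, which follows by double counting the edges between $V_i$ and $B$.

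For the structural fact I would argue by contradiction. Suppose some $w\in B$, say $w\in V_j\in\B$, has no neighbor in some $V_i\in\A$, i.e. $N_{V_i}(w)=\emptyset$. Since $V_j\in\B$ and $V_i\in\A$ these classes are distinct, so $w$ witnesses the arc $V_jV_i$ in $\HH$, and hence $V_i$ is reachable from $V_j$. Because $V_i$ is accessible, $V_1$ is reachable from $V_i$; concatenating the two directed paths shows $V_1$ is reachable from $V_j$, so $V_j$ is accessible, contradicting $V_j\in\B$.

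For the identity, unfolding $f_\W(v)=\sum_{w\in N_B(v)}f_\W(v,w)$ gives $\sum_{v\in V_i}f_\W(v)=\sum_{v\in V_i}\sum_{w\in N_B(v)}f_\W(v,w)$, which is exactly the middle expression (a sum over all pairs $v\in V_i$, $w\in B$ with $vw\in E(G)$). I would then swap the order of summation and group by $w\in B$: for fixed $w$ the inner sum ranges over $v\in N_{V_i}(w)$ and $f_\W(v,w)$ is independent of $v$. If $w\in B\setminus W$ the inner sum is $|N_{V_i}(w)|\cdot\frac{1}{|N_{V_i}(w)|}=1$, and if $w\in W$ it is $|N_{V_i}(w)|\cdot\frac{0.5}{|N_{V_i}(w)|}=0.5$; summing over $w\in B$ yields $|B\setminus W|+0.5|W|$. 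Finally, since $V_1\in\A$ every class in $\B$ (hence every class in $\W$) has exactly $s$ vertices, so $|B|=s|\B|$ and $|W|=s|\W|$, whence $|B\setminus W|=s|\B\setminus\W|$ and the quantity equals $s(|\B\setminus\W|+0.5|\W|)$.

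There is no genuine obstacle here; the only point requiring care is that the structural fact must be invoked before dividing by $|N_{V_i}(w)|$, so that this quantity is nonzero and the inner sum over $w\in B\setminus W$ is exactly $1$ (not merely at most $1$). One also has to remember that all classes in $\B$ have size $s$, which is why $a\ge1$ — a consequence of $V_1$ being accessible — is implicitly used. Otherwise the argument is a routine interchange of finite sums.
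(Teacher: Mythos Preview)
Your proof is correct and follows exactly the reasoning the paper intends. The paper states this as an observation without a separate proof environment, embedding the structural fact (``every $w\in B$ has at least one neighbor in $V_i$'') in the statement and leaving the double-counting to the reader; your argument simply makes these implicit steps explicit.
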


We now provide the promised proof sketch of the \hyperref[main-thm]{Main Theorem}. We start with a minimum counterexample $G$ to the \hyperref[main-thm]{Main Theorem}. We delete a vertex of low degree from $G$ to get a new graph $H$. The graph $H$ admits an equitable coloring $\vph$ by the minimality of $G$. We pick $\vph$ to maximize the value of $a$ from Definition~\ref{digraph-def}. Finally, we show that we can move some special vertices around to get a new coloring with a larger value of $a$, which gives a contradiction. 

To find those special vertices, which we call $v_1$, $v_2$, and $v_3$, we make use of the function $f$ from the end of Definition~\ref{digraph-def}. We typically consider $f_{\emptyset}$; although occasionally, we consider $f_{\W}$ for other sets $\W$. The vertex $v_1$ has small values of $f_\emptyset$ and $f_{\W}$ and is easy to find given the structure of $\A$. The vertices $v_2$ and $v_3$ have large values of $f_\emptyset$ and $f_{\W}$, and are guaranteed to exist because of the small $f$ value of $v_1$, using averaging arguments. 

Vertices with large values of $f_\emptyset$ and $f_{\W}$ are useful since nearly all of their neighbors must be solo neighbors in $B$, with exactly one such neighbor in most classes of $\B$. Nice solo neighbors, in particular, come in pairs with each pair sharing a class (Claim~\ref{two-solo-nbrs}). This allows for some leftover classes to which $v_2$ and $v_3$ are movable; i.e., $\B'(v_2)\neq\emptyset$ and $\B'(v_3)\neq\emptyset$. It also gives us substantial flexibility in that solo neighbors of $v_2$ and $v_3$ are then movable to $V_1$. This shuffling of vertices results in a ``better'' coloring (Claim~\ref{B'-nice-solo}), which contradicts our choice of $\vph$ as the best. 

It is also worth noting that we aim to shuffle vertices in such a way that most accessible classes remain accessible (possibly with respect to a new class $V_1$) even after shuffling. As a result, it suffices to identify one or two more classes that are accessible, in order to get a ``better" coloring. This is done by choosing $v_1$, $v_2$, and $v_3$ to be in a terminal class $V_i$ (Claim~\ref{dist-1-class}). Furthermore, $V_i$ itself remains accessible by the fact that $v_2$ and $v_3$ are ordinary vertices. In other words, there is another vertex (namely $v_1$) which acts as the initial ``witness" for a $V_i,V_1$-path.  

\section{Proof} 

\begin{proof}[Proof of the {\hyperref[main-thm]{Main Theorem}}]
Fix $r\geq13$ and consider the class $\mathcal{P}^r_1$ of 1-planar graphs with $\Delta(G)\le r$. Pick $G\in\mathcal{P}^r_1$ such that $G$ is a counterexample to the Main Theorem and $|E(G)|$ is minimum. By Lemma~\ref{divisibility-lem}, we may assume $|V(G)|=rs$ for some $s\ge1$. If $d(v)=0$ for every $v\in V(G)$, then the result follows trivially. So, fix $x\in V(G)$ with $1\le d(x)\le7$; note that $x$ must exist by Lemma~\ref{mindeg-lem}. Similarly, fix $y\in N(x)$. By the minimality of $|E(G)|$, there exists an equitable $r$-coloring $\psi$ of $G\setminus\{xy\}$. If $\psi(x)\neq\psi(y)$, then $\psi$ is also an equitable $r$-coloring of $G$, and we are done. So, assume $\psi(x)=\psi(y)$. Let $V_1,V_2,\dots,V_r$ be the color classes of $\psi$ with $x,y\in V_1$ and $|V_i|=s$ for every $i\in[r]$. Let $H:=G\setminus\{x\}$ and let $\vph$ be the restriction of $\psi$ to $H$. Let $\HH,\A,\B,A,B,a$ and $b$ be as in Definition~\ref{digraph-def}. Pick $\vph$ (more accurately, $\psi$) such that $a$ is maximum. 

\begin{claim}
\label{size-of-a}
The following holds: $a\le7$.
\end{claim}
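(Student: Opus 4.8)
The plan is to bound $a$ by exploiting the edge bound from Lemma~\ref{edges-lem} applied to a carefully chosen subgraph of $H$, together with the structural fact that $B$ is nonempty (since $|V(H)| = rs - 1$ is not divisible by $r$, so $b \ge 1$ and in fact $a \le r-1$). The key observation is that every vertex $w \in B$ has at least one neighbor in each of the $a$ accessible classes $V_1, \dots, V_a$ — this is exactly the content of Observation~\ref{weight-sum}, and it follows directly from the definition of $\B$: if some $w \in V_j$ with $V_j \in \B$ had no neighbor in some $V_i \in \A$, then $w$ would witness the arc $V_jV_i$, making $V_j$ accessible, a contradiction. Consequently, fixing any single vertex $w \in B$, the vertex $w$ has degree at least $a$ in $H$, and since $\Delta(H) \le \Delta(G) \le r$, this already gives $a \le r$; but I need the stronger bound $a \le 7$, so a cruder degree argument alone will not suffice and I must count edges globally.

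The main step is an edge-counting argument on the bipartite-like structure between $A$ and $B$. Consider first the case $b = 1$, so $B = V_r$ has exactly $s$ vertices, each with at least $a$ neighbors in $A$; also consider $G[B \cup \{x\}]$ or the subgraph induced by $A \cup B$. Actually the cleanest route: let $W$ be a set of $\lceil 8/1 \rceil$-ish vertices — more precisely, take the subgraph $H' := H[V_1 \cup \cdots \cup V_a \cup B']$ where $B'$ is a small subset of $B$, chosen so that $H'$ is small enough that the edge bound $|E(H')| \le 4|V(H')| - 8$ forces a contradiction if $a \ge 8$. Concretely, if $a \ge 8$, pick one vertex $w_0 \in B$; then $w_0$ has $\ge a \ge 8$ neighbors spread across $V_1, \dots, V_a$. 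This is not yet enough. Instead, I would pick a small set $S$ of vertices in $B$ and look at $H[A \cup S]$: each vertex of $S$ contributes at least $a$ edges into $A$, giving $|E(H[A\cup S])| \ge a|S| + |E(H[A])|$, while Lemma~\ref{edges-lem} gives $|E(H[A \cup S])| \le 4(|A| + |S|) - 8 = 4(as + |S|) - 8$. Since we may also assume $G$ (hence $H$) has no isolated vertices among the relevant classes and $|E(H[A])|$ can be bounded below using that $H$ is a counterexample with every $V_i$ accessible, combining these inequalities and choosing $|S|$ optimally (taking $|S|$ on the order of $s$, i.e. $S = B$ when $b$ is small, or a single class otherwise) should yield $a \le 7$ after dividing through by $s$ and simplifying.

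I expect the main obstacle to be handling the interaction between $s$ and $a$ cleanly: when $s$ is large the averaging is comfortable, but small values of $s$ (say $s = 1$ or $s=2$) are edge cases where the $-8$ additive term in Lemma~\ref{edges-lem} matters, and one may need to treat them separately or observe that when $s$ is small the total number of vertices $rs - 1$ is itself small relative to $r \ge 13$, forcing structure. A secondary subtlety is whether to count edges inside $A$: discarding them (using only $|E(H')| \ge a|S|$) may lose too much when $a$ is close to $8$, so I anticipate needing the lower bound on $|E(H[A])|$ coming from the fact that, within $A$, consecutive accessible classes are joined by the arcs of $\HH$ witnessing reachability, which forces a certain number of non-edges and hence (via $\Delta \le r$) a certain number of edges. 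If that turns out to be awkward, the fallback is to apply Lemma~\ref{edges-lem} to the bipartite subgraph $G[A', B']$ between suitable subsets, using the sharper bound $|E| \le 3|V| - 6$ for bipartite 1-planar graphs, which gives a cleaner inequality $a s \cdot (\text{something}) + b s \le 3(as + bs) - 6$ and a correspondingly cleaner bound on $a$.
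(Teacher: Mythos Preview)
Your approach has a genuine gap: edge-counting between $A$ and $B$ cannot by itself yield $a\le 7$, because the inequality $abs \le 3(rs-1)-6$ (or any variant from Lemma~\ref{edges-lem}) is symmetric in the roles of $a$ and $b=r-a$ and therefore fails to exclude large values of $a$. For instance, if $a=r-1$ and $b=1$, then $abs=(r-1)s$, while the bipartite bound gives roughly $3rs$, so there is no contradiction. Your proposal to look at small subsets $S\subseteq B$ does not help, since each vertex of $S$ still contributes only about $a$ edges, and the resulting inequality $a|S|\le 4(as-1+|S|)-8$ is far too weak. The paper does use exactly this edge-counting argument, but only in the subsequent Claim~\ref{A-B-edges}, and crucially \emph{after} already knowing $a\le 7$; it then rules out $5\le a\le 7$ to conclude $a\le 4$.

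The idea you are missing is the deleted vertex $x$. Recall that $H=G\setminus\{x\}$ where $x$ was chosen with $d_G(x)\le 7$. If $a\ge 8$, then by pigeonhole $x$ has no neighbor in some accessible class $V_i$. Since $V_i\in\A$, there is a directed $V_i,V_1$-path $P$ in $\HH$; moving each witness along $P$ from its home to its target class, and then placing $x$ into $V_i$, produces an equitable $r$-coloring of $G$ itself, contradicting that $G$ is a counterexample. That is the entire proof: the bound $7$ is exactly the degree bound on $x$ from Lemma~\ref{mindeg-lem}, and no edge-counting is needed at this stage.
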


\begin{proof}[Proof of Claim~\ref{size-of-a}]
To see this, assume otherwise, i.e., assume $a\ge8$. Since $d(x)\le7$, there exists $i\in[a]$ such that $x$ is movable to $V_i$. Moreover, by the definition of $\A$, there exists a $V_i,V_1$-path $P$ in $\HH$. Move the witness for each arc in $P$ from its home class to its target class. Now moving $x$ to $V_i$ gives an equitable $r$-coloring of $G$, a contradiction. 
\end{proof}

\begin{claim}
\label{A-B-edges}
The following holds: $abs\le |E(A,B)|\le 3(rs-1)-8$. In particular, $a\le4$.
\end{claim}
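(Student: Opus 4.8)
The plan is to establish the two inequalities separately and then read off the bound on $a$ by elementary arithmetic, using $a\le 7$ from Claim~\ref{size-of-a}. Throughout I work with the notation already fixed: $\A=\{V_1,\dots,V_a\}$, $\B=\{V_{a+1},\dots,V_r\}$, $b=r-a$, $A=\cup_{i=1}^a V_i$, $B=\cup_{i=a+1}^r V_i$, and $A\cup B=V(H)$ with $|V(H)|=rs-1$.

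For the lower bound $abs\le|E(A,B)|$, I would argue as follows. By Observation~\ref{weight-sum}, every vertex $w\in B$ has at least one neighbor in each accessible class; since $\A$ consists of $a$ pairwise disjoint classes, this means $w$ has at least $a$ neighbors in $A$. Every class in $\B$ is some $V_i$ with $i\ge 2$ and hence has size $s$, so $|B|=bs$. Summing $d_A(w)$ over all $w\in B$ therefore gives $|E(A,B)|=\sum_{w\in B}d_A(w)\ge a\,|B|=abs$.

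For the upper bound, consider the spanning subgraph $F$ of $H$ with $E(F)=E(A,B)$. Then $F$ is bipartite with parts $A$ and $B$, and it is 1-planar as a subgraph of $G$. Since $|V(F)|=rs-1\ge 12\ge 4$, the sharp form of Lemma~\ref{edges-lem} for bipartite 1-planar graphs — the strengthening $|E|\le 3|V|-8$ recorded in the footnote and due to Karpov — yields $|E(A,B)|=|E(F)|\le 3(rs-1)-8$.

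Combining the two bounds gives $abs\le 3(rs-1)-8=3rs-11<3rs$, and since $s\ge1$ this forces $a(r-a)<3r$, equivalently $r(a-3)<a^2$. By Claim~\ref{size-of-a} we have $a\le 7$, so it remains to rule out $a\in\{5,6,7\}$; for each such $a$ the displayed inequality gives $r<a^2/(a-3)$, and as $25/2$, $36/3$, and $49/4$ are all less than $13$, this forces $r\le12$, contradicting $r\ge13$. Hence $a\le4$. I do not expect a genuine obstacle here: the only two points requiring care are that the upper bound needs Karpov's sharper $3|V|-8$ estimate for bipartite 1-planar graphs rather than the weaker $3|V|-6$ appearing in the statement of Lemma~\ref{edges-lem}, and that the lower bound rests on the fact — already packaged in Observation~\ref{weight-sum} — that a vertex of a non-accessible class cannot miss any accessible class in its neighborhood (otherwise accessibility would propagate back along the corresponding arc of $\HH$).
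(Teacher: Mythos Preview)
Your proof is correct and follows essentially the same route as the paper: the lower bound from every $w\in B$ meeting each accessible class, the upper bound from the bipartite 1-planar edge count applied to the spanning bipartite subgraph on parts $A,B$, and then arithmetic together with Claim~\ref{size-of-a} to exclude $a\in\{5,6,7\}$. Your care about invoking Karpov's sharper $3|V|-8$ is well placed for the claim as literally stated, though note that the weaker $3|V|-6$ of Lemma~\ref{edges-lem} already suffices for the conclusion $a\le4$ (and indeed the paper's own displayed arithmetic uses $+6$, not $+8$).
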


\begin{proof}[Proof of Claim~\ref{A-B-edges}]
To prove the first statement, note that the upper bound is given by Lemma~\ref{edges-lem}. Further, since every class in $\B$ is non-accessible, every vertex in $B$ has at least one neighbor in every class in $\A$. This proves the lower bound.

Now we prove the second statement. Recall that $a\le7$, by Observation~\ref{size-of-a}. By the first statement, $abs-3(rs-1)+6\le0$. However, if $5\le a\le7$, then since $b=r-a$ and $r\ge13$, $$abs-3(rs-1)+6=a(r-a)s-3rs+9=s[r(a-3)-a^2]+9>0,$$ a contradiction.
\end{proof}

\begin{claim}
\label{strong-comp}
There exists a strong component $\C$ in $\HH[\B]$ with $|\C|\ge r-4$.
\end{claim}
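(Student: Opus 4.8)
The plan is to show that $\HH[\B]$ has a very large strong component by combining the edge bounds for bipartite $1$-planar graphs (Lemma~\ref{edges-lem}) with the structural fact that every vertex of $B$ has a neighbor in every accessible class. First I would examine the condensation $\HH[\B]^*$ of $\HH[\B]$: since $|\B| = b = r-a$, if no strong component had size at least $r-4$, then the condensation would have several nontrivial ``sink-free'' pieces or many singleton components. The key observation is that if $\C$ is a \emph{sink} strong component of the condensation (one with no out-arcs to other components of $\HH[\B]$), then for every class $V_j$ outside $\C$ in $\B$, and every $v \in V_j$, $v$ must have a neighbor in \emph{every} class of $\C$ — otherwise $V_j$ would be reachable into $\C$, and since $\C$ is a sink in $\HH[\B]$ but $\B$ is non-accessible, this still wouldn't make $V_j$ accessible, so actually the contradiction must come from a counting argument rather than accessibility directly. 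Let me reconsider.

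The right approach is a double-counting argument. Suppose for contradiction that every strong component of $\HH[\B]$ has order at most $r-5$. Consider a sink component $\C$ of the condensation; since $b = r-a \ge r-4 > r-5 \ge |\C|$, there is at least one class $V_j \in \B \setminus \C$. Because $\C$ is a sink in $\HH[\B]$, no vertex of $B \setminus C$ (where $C = \cup_{V_\ell \in \C} V_\ell$) is movable to any class of $\C$; equivalently, every vertex in $B \setminus C$ has a neighbor in every class of $\C$. Combined with the fact (from Claim~\ref{A-B-edges}'s proof) that every vertex of $B$ has a neighbor in every accessible class, I get that every vertex of $B \setminus C$ has a neighbor in each of the $a + |\C|$ classes in $\A \cup \C$. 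Now consider the bipartite graph between $B \setminus C$ and $A \cup C$: it has at least $|B \setminus C| \cdot (a + |\C|)$ edges, it is bipartite and $1$-planar, so by Lemma~\ref{edges-lem} it has at most $3(|B\setminus C| + |A| + |C|) - 6$ edges. Writing $|B \setminus C| = s(b - |\C|)$, $|A| = sa$, $|C| = s|\C|$ and using $a + b = r$, this yields an inequality of the form $s(b-|\C|)(a+|\C|) \le 3s\,r - 6$ (after combining with the $A,B$ edge count — I'd actually want to union the two edge sets carefully to avoid double counting, counting edges from $B \setminus C$ into $A \cup C$ plus edges from $C$ into $A$, giving roughly $s(b-|\C|)(a+|\C|) + s|\C|\,a$ edges on $s(b + a) = rs$ vertices, bounded by $3rs - 6$, or the weaker $4rs - 8$ if a careful bipartite splitting fails).

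I would then push the arithmetic: with $a \le 4$ from Claim~\ref{A-B-edges}, $r \ge 13$, and $|\C| \le r - a - 5$ (so that $b - |\C| \ge 5$), the quantity $(b - |\C|)(a + |\C|)$ is minimized at the endpoints of the range of $|\C|$, and in each case the product exceeds $3r$ (for instance with $a=4$, $b = r-4$, the worst case has $b - |\C| = 5$, giving $5 \cdot (r-5) \ge 5(r-5) > 3r$ for $r \ge 13$), contradicting the edge bound after dividing by $s$. The main obstacle I anticipate is the bookkeeping in the edge count: I need the relevant subgraph of $G$ to be genuinely bipartite so that the stronger $3|V|-6$ bound of Lemma~\ref{edges-lem} applies — this requires choosing the bipartition to be (vertices of $B \setminus C$) versus (everything else), and checking there are no edges within $B \setminus C$ that I'm forced to include, which there needn't be since I only count edges \emph{from} $B \setminus C$ \emph{to} $A \cup C$. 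Getting the constant right (whether I can afford $3|V|-6$ or must settle for $4|V|-8$) determines whether the bound $r - 4$ is exactly achievable or whether a small case analysis for $r \in \{13, 14, \dots\}$ near the boundary is needed; I expect $r \ge 13$ is chosen precisely so the clean bipartite bound closes the argument with room to spare.
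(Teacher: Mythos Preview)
Your proposal has the right flavor (count edges across a bipartition and invoke Lemma~\ref{edges-lem}), but there are two genuine gaps.

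First, a direction error: if $\C$ is a \emph{sink} of the condensation, then there are no arcs \emph{from} $\C$ to $\B\setminus\C$, which says no vertex of $C$ is movable outside, i.e., every vertex of $C$ has a neighbor in every class of $\B\setminus\C$. Your conclusion (``every vertex of $B\setminus C$ has a neighbor in every class of $\C$'') would require $\C$ to be a \emph{source}. This is easily patched by taking a source component instead.

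Second, and more seriously, even after the patch the counting fails when the chosen component is very small or very large. You assert $|\C|\le r-a-5$ so that $b-|\C|\ge 5$, but the hypothesis only gives $|\C|\le r-5$, hence $b-|\C|\ge 5-a$, which can be as low as $1$. Concretely, with $a=1$ and a source component of size $1$, your count gives $s(b-1)(a+1)=2s(r-2)$ edges, which does \emph{not} exceed $3(rs-1)-6$. At the other extreme $|\C|=r-5$, with $a=4$ you get $(5-a)(a+r-5)=r-1$, again far below $3r$. So the product $(b-|\C|)(a+|\C|)$ simply need not exceed $3r$ at either endpoint, and your ``worst case'' computation only checked one favorable instance.

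The paper's proof fixes this with a different idea: rather than a single source or sink, it takes a union $\U$ of strong components with $5\le|\U|\le r-5$ (such a union exists because $b\ge 9$ and each component has size $\le r-5$). The key observation is that for \emph{any} pair of classes $V_i\in\U$, $V_j\in\B\setminus\U$ lying in different strong components, at least one of the arcs $V_iV_j$, $V_jV_i$ is absent, hence $|E(V_i,V_j)|\ge s$ regardless of direction. This yields $|E(U,V(H)\setminus U)|\ge u(r-u)s\ge 5(r-5)s>3(rs-1)-6$ for $r\ge 13$. The freedom to choose $|\U|$ in the interval $[5,r-5]$ is exactly what makes the quadratic $u(r-u)$ large enough; your single-component approach lacks this flexibility.
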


\begin{proof}[Proof of Claim~\ref{strong-comp}]
Assume every strong component of $\B$ has at most $r-5$ classes. Since $r\ge13$, by Claim~\ref{A-B-edges}, we have $b=r-a\ge r-4\ge 9$. This implies that there exist strong components of $\HH[\B]$ whose union $\U$ has at least 5 and at most $r-5$ classes. 
Let $u:=|\U|$, i.e., $u$ is the number of classes in $\U$. Further, let $U:=\cup_{V_i\in \U} V_i$, and $X:=\cup_{V_i \in \B\setminus\U}V_i$. 

Recall that $|V_i|=s$ for every $V_i\in\B$. Moreover, every vertex of every class of $\U$ has at least one neighbor in every class of $\A$, by the definition of $\B$. So, $|E(U,A)|\ge aus$. Further, for every $V_i\in \U$ and every $V_j\notin \U$, either $V_iV_j$ or $V_jV_i$ is not an arc of $\HH$; otherwise, $V_i$ and $V_j$ belong to the same strong component, a contradiction. Equivalently, either every vertex of $V_i$ has at least one neighbor in $V_j$ or vice versa. So, $|E(U,X)|\ge (b-u)us$. This implies $|E(U,V(H)\setminus U)|\ge (a+b-u)us=(r-u)us\ge 5(r-5)s=3rs+s(2r-25)>3(rs-1)-6$, contradicting Lemma~\ref{edges-lem}. 
\end{proof}

\begin{claim}
\label{q-q'}
Fix $v\in A$. If $q(v)\ge7$, then $q'(v)\geq q-3$.
\end{claim}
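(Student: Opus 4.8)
The plan is to bound the number $s_0:=q(v)-q'(v)$ of solo neighbors of $v$ that are \emph{not} nice, and to show $s_0\le 3$ under the hypothesis $q:=q(v)\ge 7$. Write $S:=Q(v)\setminus Q'(v)$, so $|S|=s_0$, and suppose for contradiction that $s_0\ge 4$. The first step is a structural observation: by definition, a solo neighbor $u$ of $v$ is nice unless $uu'\in E(G)$ for every $u'\in Q(v)\setminus\{u\}$; hence every vertex of $S$ is adjacent in $G$ to every other vertex of $Q(v)$. In particular $S$ induces a clique, $S$ is complete to $Q(v)\setminus S$, and (trivially) $v$ is adjacent to all of $Q(v)$.

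Next I would count edges in the 1-planar subgraph $G':=G[\{v\}\cup Q(v)]$, which has exactly $q+1$ vertices. The edges identified above contribute at least $q+\binom{s_0}{2}+s_0(q-s_0)=q+s_0q-\binom{s_0+1}{2}$ edges to $G'$. Applying the first bound of Lemma~\ref{edges-lem} to $G'$ gives $q+s_0q-\binom{s_0+1}{2}\le 4(q+1)-8$, which rearranges to
\[
q(s_0-3)\;\le\;\binom{s_0+1}{2}-4.
\]

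Finally I would combine this with the trivial inequality $s_0\le q$ (as $S\subseteq Q(v)$) to finish. Since $s_0\ge 4$ we have $s_0-3\ge 1$, so $s_0(s_0-3)\le q(s_0-3)\le\binom{s_0+1}{2}-4$, i.e. $s_0^2-7s_0+8\le 0$; as $s_0$ is an integer this forces $s_0\le 5$. It then remains to exclude $s_0\in\{4,5\}$: for $s_0=4$ the displayed inequality gives $q\le\binom{5}{2}-4=6$, and for $s_0=5$ it gives $2q\le\binom{6}{2}-4=11$, hence $q\le 5$ --- both contradicting $q\ge 7$. Therefore $s_0\le 3$, that is, $q'(v)\ge q(v)-3$.

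I do not anticipate a real obstacle: the content is the observation that the non-nice solo neighbors form a clique completely joined to the rest of $Q(v)$, after which Lemma~\ref{edges-lem} does the work. The only point requiring slight care is the case $s_0=4$, which is exactly where one needs $q\ge 7$ rather than merely $q\ge 6$, together with checking that the stated edge count for $G'$ is a genuine lower bound (we simply discard all edges inside $Q(v)\setminus S$, which only helps).
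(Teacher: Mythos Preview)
Your proof is correct and essentially the same as the paper's. Both use the identical structural observation (each non-nice solo neighbor is adjacent to every other solo neighbor) and apply the edge bound of Lemma~\ref{edges-lem} to $G[\{v\}\cup Q(v)]$; in fact your lower bound $q+s_0q-\binom{s_0+1}{2}$ coincides exactly with the paper's $\binom{q+1}{2}-\binom{q'}{2}$ once you substitute $q'=q-s_0$. The only cosmetic difference is that the paper substitutes $q'\le q-4$ directly to get $5q-10>4(q+1)-8$, whereas you parametrize by $s_0$ and finish with a short case analysis on $s_0\in\{4,5\}$ --- a slightly longer route to the same contradiction.
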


\begin{proof}[Proof of Claim~\ref{q-q'}]
Assume $q'(v)\le q(v)-4$. Consider the graph $H[Q(v)\cup\{v\}]$. Observe that $uw\in E(H)$ for every $u\in Q(v)\setminus Q'(v)$ and $w\in Q(v)\setminus\{u\}$, by the definition of $Q'(v)$. So, $|E(H[Q(v)\cup\{v\}])|=|E(K_{q(v)+1})|-|E(H[Q'(v)])|$. Recall that $|E(H[Q'(v)])|\le\binom{q'(v)}{2}$. So, if $q(v)\ge3$ and $q'(v)\le q(v)-4$, then 
\begin{align*}
|E(H[Q(v)\cup\{v\}])|
&=|E(K_{q(v)+1})|-|E(H[Q'(v)])| \\
&\ge\binom{q(v)+1}{2}-\binom{q'(v)}{2} \\
&\ge\binom{q(v)+1}{2}-\binom{q(v)-4}{2} \\
&=5q(v)-10>4(q(v)+1)-8,    
\end{align*} contradicting Lemma~\ref{edges-lem}.
\end{proof}

\begin{claim}
\label{two-solo-nbrs}
If $v$ is ordinary with $v\in V_i$ and $u$ is a nice solo neighbor of $v$ with $u\in V_j$, then $v$ has at least one more neighbor in $V_j$. 
\end{claim}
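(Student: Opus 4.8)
The plan is to contradict the maximality of $a$ by a single swap. Suppose, for contradiction, that $u$ is the only neighbour of $v$ in $V_j$. Since $u$ is a \emph{nice} solo neighbour of $v$, fix $u_2\in Q(v)$ with $uu_2\notin E(G)$; by definition $u_2\neq u$, and as a solo neighbour of $v$ we have $vu_2\in E(G)$ with $v$ the unique neighbour of $u_2$ in $V_i$. If $u_2\in V_j$, then $v$ already has the two distinct neighbours $u$ and $u_2$ in $V_j$, which is the desired conclusion. So I may assume $u_2\in V_k$ for some class $V_k\neq V_j$; note $V_j,V_k\in\B$ while $V_i\in\A$.

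Next I would swap $v$ and $u$ in $\psi$: recolour $v$ with colour $j$ and $u$ with colour $i$ to obtain $\psi'$, and let $\vph'$ be its restriction to $H$. This is proper: the only neighbour of $v$ in $V_j$ (namely $u$) and the only neighbour of $u$ in $V_i$ (namely $v$) both move out, and — by the solo conditions — no other vertex of $V_i$ is adjacent to $u$, and no other vertex of $V_j$ is adjacent to $v$. All class sizes are unchanged. Now $v\neq x$ (as $v\in V(H)$) and $u\notin\{x,y\}$ (as $u\in B$ while $x,y\in V_1$); if moreover $v=y$ — which is possible only when $a=1$, since $V_i\in\T$ forces $V_i\neq V_1$ — then $\psi'(x)\neq\psi'(y)$, so $\psi'$ is already an equitable $r$-colouring of $G$, a contradiction. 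Hence I may assume $v\neq y$; then $x,y$ still share colour $1$, and $\vph'$ is a colouring of $H$ of exactly the form in Definition~\ref{digraph-def} — with $V_i'=(V_i\setminus\{v\})\cup\{u\}$, $V_j'=(V_j\setminus\{u\})\cup\{v\}$, the remaining classes unchanged, and $V_1'$ the unique deficient class — so it is a legitimate competitor in the maximisation of $a$.

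It remains to check that the digraph $\HH'$ of $\vph'$ satisfies $a':=|\A'|\ge a+1$. Three observations suffice. First, in $\HH'$ the vertex $u_2\in V_k$ is movable to $V_i'$: its only old neighbour in $V_i$ was $v$, which has left, and $uu_2\notin E(G)$; so $\HH'$ contains the arc $V_kV_i'$. Second, $V_i'$ is accessible in $\HH'$: if $a=1$ then $V_i=V_1$, so $V_i'=V_1'$ is the deficient class; if $a>1$ then $v$ ordinary yields $u'\in V_i\setminus\{v\}$ movable in $\HH$ to some $V_\ell\in\A$ with $\ell\neq i$, and since $u'\notin\{v,u\}$ and $V_\ell$ is unchanged ($\ell\notin\{i,j\}$, as $V_\ell\in\A$), the arc $V_i'V_\ell$ persists in $\HH'$. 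Third, every class of $\A\setminus\{V_i\}$ stays accessible in $\HH'$: since $V_i$ is terminal, each such $V_m$ has a directed path to $V_1$ in $\HH$ avoiding $V_i$, and this path also avoids the non-accessible class $V_j$ (otherwise the tail of the path from its first visit to $V_j$ would make $V_j$ accessible); hence all of its arcs join classes with unchanged contents, so the path survives in $\HH'$. Combining these, $V_\ell$ (or $V_1'$ when $a=1$) is accessible in $\HH'$, hence so is $V_i'$, hence so is $V_k$. Thus $\A'\supseteq(\A\setminus\{V_i\})\cup\{V_i',V_k\}$, a set of $a+1$ distinct classes, contradicting the maximality of $a$.

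The step I expect to be the crux is the third observation: one must ensure that replacing the contents of $V_i$ by those of $V_i'$ (rather than literally deleting $V_i$ from $\HH$) does not sever the paths witnessing accessibility of the other accessible classes. This is precisely where the hypothesis that $V_i$ is \emph{terminal} enters, together with the elementary but essential point that a directed path ending at $V_1$ never visits a non-accessible class, so in particular these paths avoid $V_j$.
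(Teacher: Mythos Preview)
Your proof is correct and follows essentially the same approach as the paper: swap $v$ and $u$, then observe that $u_2$ (the paper calls it $w$) becomes movable to the new $V_i'$, so $V_k$ is accessible and $a'>a$; your explicit check that the accessibility paths for $\A\setminus\{V_i\}$ avoid $V_j$ is a detail the paper leaves implicit. One minor issue: your digression on the case $v=y$ is unnecessary (your main argument via $\vph'$ already covers it) and slightly flawed, since moving $u$ into colour~$1$ in $\psi$ could conflict with $x$ if $xu\in E(G)$, so $\psi'$ need not be proper on $G$.
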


\begin{proof}[Proof of Claim~\ref{two-solo-nbrs}]
Since $v$ is ordinary, either (i) $a=1$ (so, $V_i=V_1$), or (ii) $V_i\in\T$ (so, $i\neq 1$) and there exists $z\in V_i\setminus\{v\}$ such that $z$ is movable to some other class $V_l$ ($l\neq i$) in $\A$. Assume $u$ is the only neighbor of $v$ in $V_j$. Since $u$ is a nice solo neighbor of $v$, there exists $w\in Q(v)$ such that $uw\notin E(H)$; see Figure~\ref{two-solo-nbrs-fig}. Let $V_k$ be the class of $w$. Note that $k\neq j$ since $u$ is the only neighbor of $v$ in $V_j$. Consider moving $v$ to $V_j$ and $u$ to $V_i$. 

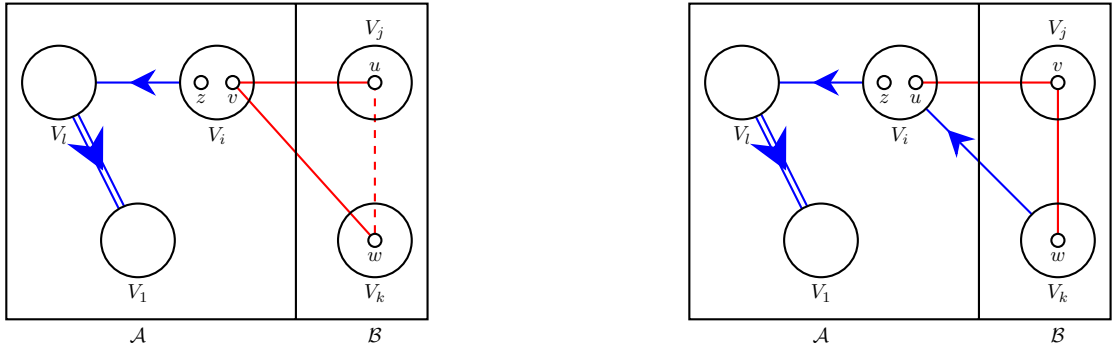
\begin{figure}[!h]
\centering
\begin{subfigure}{0.55\textwidth}
\begin{tikzpicture}[scale=0.7,every node/.style={scale=0.7}]
\draw[thick, dashed, red] (6,0) -- (6,-3);  
\draw[thick] (3.3,0) edge[red] (6,0) (3.3,0) edge[red] (6,-3) (2.3,0) edge[blue,decoration={markings, mark=at position 0.6 with {\arrow{Stealth[length=10pt,width=10pt]}}},postaction={decorate}] (0.7,0) (0.313,-0.626) edge[double, double distance= 0.5mm, blue, decoration={markings, mark=at position 0.6 with {\arrow{Stealth[length=15pt,width=15pt]}}},postaction={decorate}] (1.187,-2.374);
\draw[thick] (2.7,0) node[uStyle] {} (3.3,0) node[uStyle] {} (6,0) node[uStyle] {} (6,-3) node[uStyle] {} (2.7,-0.3) node {$z$} (3.3,-0.3) node {$v$} (6,0.3) node {$u$} (6,-3.3) node {$w$} (0,-1) node {$V_l$} (3,-1) node {$V_i$} (1.5,-4) node {$V_1$} (6,1) node {$V_j$} (6,-4) node {$V_k$} (1.5,-4.8) node {$\A$} (6,-4.8) node {$\B$};
\draw[thick] (0,0) circle (0.7cm) (3,0) circle (0.7cm) (1.5,-3) circle (0.7cm) (6,0) circle (0.7cm) (6,-3) circle (0.7cm);
\draw[thick] (-1,1.5) rectangle (7,-4.5) (4.5,1.5) -- (4.5,-4.5);
\end{tikzpicture}
\end{subfigure}%
\begin{subfigure}{0.35\textwidth}
\begin{tikzpicture}[scale=0.7,every node/.style={scale=0.7}]
\draw[thick] (3.3,0) edge[red] (6,0) (6,0) edge[red] (6,-3) (2.3,0) edge[blue,decoration={markings, mark=at position 0.6 with {\arrow{Stealth[length=10pt,width=10pt]}}},postaction={decorate}] (0.7,0) (5.505,-2.505) edge[blue,decoration={markings, mark=at position 0.8 with {\arrow{Stealth[length=10pt,width=10pt]}}},postaction={decorate}] (3.495,-0.495) (0.313,-0.626) edge[double, double distance= 0.5mm, blue, decoration={markings, mark=at position 0.6 with {\arrow{Stealth[length=15pt,width=15pt]}}},postaction={decorate}] (1.187,-2.374);
\draw[thick] (2.7,0) node[uStyle] {} (3.3,0) node[uStyle] {} (6,0) node[uStyle] {} (6,-3) node[uStyle] {} (2.7,-0.3) node {$z$} (3.3,-0.3) node {$u$} (6,0.3) node {$v$} (6,-3.3) node {$w$} (0,-1) node {$V_l$} (3,-1) node {$V_i$} (1.5,-4) node {$V_1$} (6,1) node {$V_j$} (6,-4) node {$V_k$} (1.5,-4.8) node {$\A$} (6,-4.8) node {$\B$};
\draw[thick] (0,0) circle (0.7cm) (3,0) circle (0.7cm) (1.5,-3) circle (0.7cm) (6,0) circle (0.7cm) (6,-3) circle (0.7cm);
\draw[thick] (-1,1.5) rectangle (7,-4.5) (4.5,1.5) -- (4.5,-4.5);
\end{tikzpicture}
\end{subfigure}
\caption{The red lines and red dashed lines are edges and non-edges of $H$, respectively. The blue lines and blue double lines are arcs and paths of $\HH$, respectively. Left: An example of $\HH$ in Claim~\ref{two-solo-nbrs} before moving $v$ and $u$. Right: The structure of $\HH$ in Claim~\ref{two-solo-nbrs} after moving $v$ and $u$.}
\label{two-solo-nbrs-fig}
\end{figure}

If we are in case (i), then note that $w$ is now movable to $(V_1\cup\{u\})\setminus\{v\}$ since $v$ was the only neighbor of $w$ in $V_1$ and $wu\notin E(H)$. This implies $V_k$ is now accessible which contradicts the maximality of $a$. Thus, assume instead we are in case (ii).  

Since $V_i\in\T$, every class $V_m\in\A$ with $m\neq i$ is still accessible. Further, $(V_i\cup\{u\})\setminus\{v\}$ is still accessible since $z$ is movable to $V_l$. However, as before, $w$ is now movable to $(V_i\cup\{u\})\setminus\{v\}$ since $v$ was the only neighbor of $w$ in $V_i$ and $wu\notin E(H)$. This implies $V_k$ is also accessible, which again contradicts the maximality of $a$.   
\end{proof}

\begin{claim}
\label{B'-nice-solo}
If $v$ is ordinary with $v\in V_i$ and $u$ is a nice solo neighbor of $v$ with $u\in V_j$, then $V_j$ is not reachable from $V_k$ for every $V_k\in \B'(v)$.
\end{claim}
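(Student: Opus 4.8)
The plan is to argue by contradiction: assuming the conclusion fails, I will produce an equitable coloring $\vph'$ of $H$ whose digraph $\HH'$ has at least $a+1$ accessible classes, contradicting the maximality of $a$. So suppose $V_j$ is reachable from some $V_k\in\B'(v)$. Since $u\in V_j$ is a neighbor of $v$, we have $V_j\notin\B'(v)$, hence $V_k\neq V_j$ and there is a $V_k,V_j$-path $P$ in $\HH$ of length at least one. Observe first that $\HH$ has no arc from $\B$ to $\A$: an arc $V_pV_q$ with $V_p\in\B$ and $V_q\in\A$ would, together with a $V_q,V_1$-path, make $V_p$ accessible. Hence $P$ lies entirely inside $\HH[\B]$, so it avoids both $V_i$ and $V_1$.

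The surgery consists of three moves. (1) Move $v$ from $V_i$ to $V_k$; this is valid since $V_k\in\B'(v)$. (2) Move $u$ from $V_j$ to $V_i$; this is valid since $v$ was the only neighbor of $u$ in $V_i$ and at this point $V_i$ has only lost $v$. (3) Push one vertex from $V_k$ to $V_j$ along $P=W_0W_1\cdots W_t$ (with $W_0=V_k$ and $W_t=V_j$): letting $z_\ell\in W_\ell$ witness the arc $W_\ell W_{\ell+1}$, move $z_{t-1}$ into $W_t$, then $z_{t-2}$ into $W_{t-1}$, and so on down to $z_0$ into $W_1$. Each move is valid because at the moment it is made the target class is contained in its original version in $\HH$, and $z_0\neq v$ since $z_0$ originally lay in $V_k$ while $v$ lay in $V_i$. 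After all these moves every color class has size $s$ except $V_1$, which still has size $s-1$; call the resulting coloring $\vph'$, with associated digraph $\HH'$ (and with $V_1$ again the small class).

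Now I would verify the accessibility claims in $\HH'$. Among $V_1,\dots,V_a$ only $V_i$ has changed, and only by swapping $v$ for $u$. For each $m\in\{1,\dots,a\}\setminus\{i\}$ (this index set is empty in case (i), where $a=1$): since $V_i\in\T$ in case (ii), there is a $V_m,V_1$-path in $\HH\setminus V_i$, which necessarily stays inside $\A$ (no arc leaves $\B$ for $\A$) and hence uses only classes untouched by the surgery, so it persists in $\HH'$ and $V_m$ is accessible there. For $V_i$ itself: in case (i) it is the size-$(s-1)$ class, hence trivially accessible; in case (ii), ordinariness of $v$ gives $z\in V_i\setminus\{v\}$ movable to some $V_l\in\A\setminus\{V_i\}$, and since $V_l$ is untouched the arc $V_iV_l$ survives in $\HH'$ with $V_l$ accessible there, so $V_i$ is accessible. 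Finally, since $u$ is nice there is a solo neighbor $w$ of $v$ with $w\neq u$ and $wu\notin E(H)$; let $V_{k'}$ be the class of $w$ in $\vph'$. As $w\notin\{v,u\}$ and every surgery move crossing $\A$ moved only $v$ or $u$, the class $V_{k'}$ lies in $\B$ and is therefore not among $V_1,\dots,V_a$. The only neighbor of $w$ in the old $V_i$ was $v$, and $wu\notin E(H)$, so $w$ has no neighbor in the new $V_i$; hence $\HH'$ contains the arc $V_{k'}V_i$, and since $V_i$ is accessible in $\HH'$, so is $V_{k'}$. Thus $\HH'$ has at least $a+1$ accessible classes, which is the desired contradiction.

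The part I expect to be the real obstacle is this last stretch of bookkeeping, together with checking that $\vph'$ is a genuine competitor for $\vph$, i.e. that it extends to an equitable coloring of $G\setminus\{xy\}$ after recoloring $x$ to $\vph'(y)$. The bookkeeping is kept under control by the fact that $P\subseteq\HH[\B]$, which quarantines the surgery away from $\A$ and from $V_1$: no accessibility certificate living inside $\A$ is disturbed, and in case (ii) nothing is moved into $V_1$, so $y$ stays in $V_1$ and $N_G(x)\setminus\{y\}$ stays out of $V_1$ exactly as for $\vph$, and $x$ may again be recolored to the color of $V_1$. In the degenerate case $a=1$, where $V_i=V_1$ itself changes, one additionally needs the short check that $y$ is not the moved vertex $v$ and that no neighbor of $x$ lands in $V_1$; granting this, the maximality of $a$ is again contradicted, which proves the claim.
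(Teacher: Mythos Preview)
Your proof is correct and follows essentially the same approach as the paper: move $v$ to $V_k$, move $u$ to $V_i$, push witnesses along $P$, and observe that the other nice solo neighbor $w$ of $v$ now witnesses an arc from a class of $\B$ into the new $V_i$, yielding one extra accessible class. Two minor differences are worth noting. First, where the paper arranges (by swapping the roles of $u$ and $w$ if necessary) that $P$ avoids the class $V_m$ of $w$, you instead let the surgery run and then track $w$'s post-surgery class $V_{k'}$; both devices work, and your explicit observation that $P\subseteq\HH[\B]$ is exactly what keeps this tracking under control. Second, your closing worry about extending $\vph'$ back to a coloring $\psi'$ of $G\setminus\{xy\}$ with $\psi'(x)=\psi'(y)$ is misplaced: the maximality of $a$ is taken over all proper $r$-colorings of $H$ with one class of size $s-1$ and the rest of size $s$ (the role of $\psi$ is only to guarantee that at least one such coloring exists). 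Hence the ``short check that $y$ is not the moved vertex $v$ and that no neighbor of $x$ lands in $V_1$'' that you flag in the $a=1$ case is simply not required, and nothing needs to be granted.
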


\begin{figure}[h!]
\centering
\begin{subfigure}{0.55\textwidth}
\begin{tikzpicture}[scale=0.7,every node/.style={scale=0.7}]
\draw[thick, dashed, red] (6,-1.5) edge[bend left=45] (6,-4);
\draw[thick] (3.3,0) edge[red] (6,-1.5) (3.3,0) edge[red] (6,-4) (2.3,0) edge[blue,decoration={markings, mark=at position 0.6 with {\arrow{Stealth[length=10pt,width=10pt]}}},postaction={decorate}] (0.7,0) (0.313,-0.626) edge[double, double distance= 0.5mm, blue, decoration={markings, mark=at position 0.6 with {\arrow{Stealth[length=15pt,width=15pt]}}},postaction={decorate}] (1.187,-2.374) (6,0.3) edge[double, double distance= 0.5mm, blue, decoration={markings, mark=at position 0.7 with {\arrow{Stealth[length=12pt,width=12pt]}}},postaction={decorate}] (6,-0.8);
\draw[thick] (2.7,0) node[uStyle] {} (3.3,0) node[uStyle] {} (6,-1.5) node[uStyle] {} (6,-4) node[uStyle] {} (2.7,-0.3) node {$z$} (3.3,-0.3) node {$v$} (6,-1.2) node {$u$} (6,-4.3) node {$w$} (0,-1) node {$V_l$} (3,-1) node {$V_i$} (1.5,-4) node {$V_1$} (6,-2.5) node {$V_j$} (6,2) node {$V_k$} (6,-5) node {$V_m$} (1.5,-5.8) node {$\A$} (6,-5.8) node {$\B$};
\draw[thick] (0,0) circle (0.7cm) (3,0) circle (0.7cm) (1.5,-3) circle (0.7cm) (6,1) circle (0.7cm) (6,-1.5) circle (0.7cm) (6,-4) circle (0.7cm);
\draw[thick] (-1,2.5) rectangle (7,-5.5) (4.5,2.5) -- (4.5,-5.5);
\end{tikzpicture}
\end{subfigure}%
\begin{subfigure}{0.35\textwidth}
\begin{tikzpicture}[scale=0.7,every node/.style={scale=0.7}]
\draw[thick] (3.3,0) edge[red] (6,1) (6,1) edge[bend left=35, red] (6,-4) (2.3,0) edge[blue,decoration={markings, mark=at position 0.6 with {\arrow{Stealth[length=10pt,width=10pt]}}},postaction={decorate}] (0.7,0) (0.313,-0.626) edge[double, double distance= 0.5mm, blue, decoration={markings, mark=at position 0.6 with {\arrow{Stealth[length=15pt,width=15pt]}}},postaction={decorate}] (1.187,-2.374) (5.58,-3.44) edge[blue, decoration={markings, mark=at position 0.8 with {\arrow{Stealth[length=10pt,width=10pt]}}},postaction={decorate}] (3.42,-0.56);
\draw[thick] (2.7,0) node[uStyle] {} (3.3,0) node[uStyle] {} (6,1) node[uStyle] {} (6,-4) node[uStyle] {} (2.7,-0.3) node {$z$} (3.3,-0.3) node {$u$} (6,0.7) node {$v$} (6,-4.3) node {$w$} (0,-1) node {$V_l$} (3,-1) node {$V_i$} (1.5,-4) node {$V_1$} (6,-2.5) node {$V_j$} (6,2) node {$V_k$} (6,-5) node {$V_m$} (1.5,-5.8) node {$\A$} (6,-5.8) node {$\B$};
\draw[thick] (0,0) circle (0.7cm) (3,0) circle (0.7cm) (1.5,-3) circle (0.7cm) (6,1) circle (0.7cm) (6,-1.5) circle (0.7cm) (6,-4) circle (0.7cm);
\draw[thick] (-1,2.5) rectangle (7,-5.5) (4.5,2.5) -- (4.5,-5.5);
\end{tikzpicture}
\end{subfigure}
\caption{The red lines and red dashed lines are edges and non-edges of $H$, respectively. The blue lines and blue double lines are arcs and paths of $\HH$, respectively. Left: An example of $\HH$ in Claim~\ref{B'-nice-solo} before moving $v$, $u$, and each witness of $P$. Right: The structure of $\HH$ in Claim~\ref{B'-nice-solo} after moving $v$, $u$, and each witness of $P$.}
\label{B'-nice-solo-fig}
\end{figure}
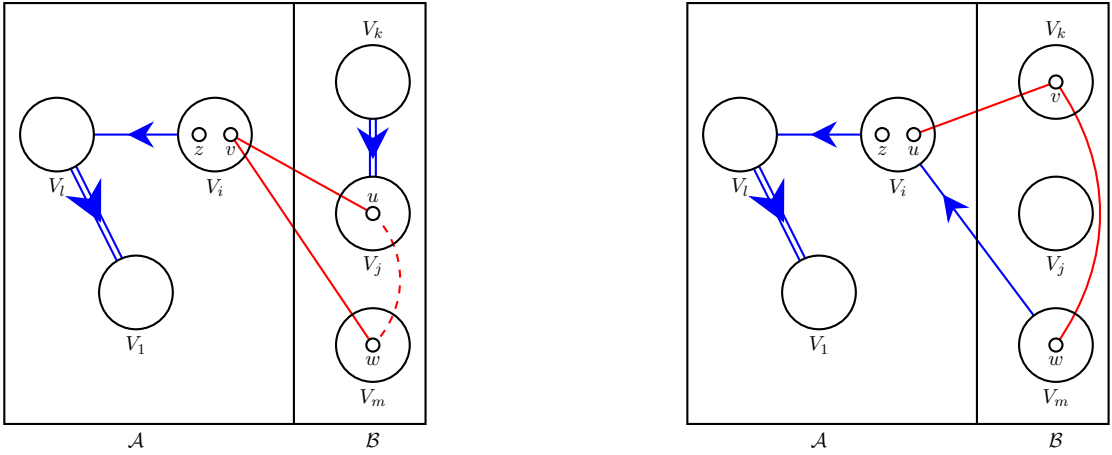

\begin{proof}[Proof of Claim~\ref{B'-nice-solo}]
Note that $V_k$ has no neighbors of $v$ since $V_k\in\B'(v)$. Moreover, $V_j$ has at least one neighbor of $v$ since $u\in V_j$. So, $k\neq j$. Since $v$ is ordinary, either (i) $a=1$ (so, $V_i=V_1$), or (ii) $V_i\in\T$ (so, $i\neq1$) and there exists $z\in V_i\setminus\{v\}$ such that $z$ is movable to some other class $V_l$ ($l\neq i$) in $\A$; see Figure~\ref{B'-nice-solo-fig}. Also, since $u$ is a nice solo neighbor of $v$, there exists $w\in Q(v)$ such that $uw\notin E(H)$. Let $V_m$ be the class of $w$ (possibly $m=j$). Assume there exists a $V_k,V_j$-path $P$ in $\HH$. We may assume $P$ does not contain $V_m$; otherwise, we exchange the roles of $u$ and $w$. Consider moving $v$ to $V_k$, moving $u$ to $V_i$, and for each arc in $P$ moving its witness from its home to its target class. 

If we are in case (i), then $w$ is now movable to $(V_1\cup\{u\})\setminus\{v\}$, since $v$ was the only neighbor of $w$ in $V_i$ and $wu\notin E(H)$. This implies $V_m$ is accessible which contradicts the maximality of $a$. Thus, assume instead we are in case (ii). 

Since $V_i\in\T$, every class $V_p\in\A$ with $p\neq i$ is still accessible. Further, $(V_i\cup\{u\})\setminus\{v\}$ is still accessible, since $z$ is movable to $V_l$. However, as before, $w$ is now movable to $(V_i\cup\{u\})\setminus\{v\}$ since $v$ was the only neighbor of $w$ in $V_i$ and $wu\notin E(H)$. This implies $V_m$ is also accessible, which contradicts the maximality of $a$. 
\end{proof}

The following two claims ensure that solo vertices do not have ``too many" neighbors in $B$. Hence, if $f$ is large for a solo vertex $v$, then $v$ must have many solo neighbors. In particular, we show in Claim~\ref{solo-vrt-has-nbr} that (almost) every solo vertex $v$ in a terminal class must have at least one neighbor in every other class of $\A$. However, we are unable to show that $v$ must have a neighbor in $V_1$ when $a=3$. Fortunately, it suffices to prove a weaker statement for that particular case, which we handle in Claim~\ref{solo-vrt-has-nbr-V1}.

\begin{claim}
\label{solo-vrt-has-nbr}
Fix a solo vertex $v\in V_i$ with $V_i\in\T$. If $a\in\{2,4\}$ (resp. $a=3$), then $v$ has at least one neighbor in $V_l$ for every $l\in[a]\setminus\{i\}$ (resp. $l\in[a]\setminus\{1,i\}$).  
\end{claim}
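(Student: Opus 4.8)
The plan is to argue by contradiction, producing from $\vph$ a new equitable coloring of $H$ with strictly more accessible classes, contradicting the maximality of $a$; this follows the ``relocate-and-shuffle'' template of Claims~\ref{two-solo-nbrs} and~\ref{B'-nice-solo}. So suppose $v$ has no neighbor in $V_l$ for some index $l$ as in the statement. Then $v$ is movable to $V_l$, and since $v$ is a solo vertex we fix a solo neighbor $u$ of $v$, say $u\in V_j$ with $V_j\in\B$, so that $v$ is the unique neighbor of $u$ in $V_i$. Note $i\neq 1$ because $V_i$ is terminal. The one structural fact I will use repeatedly is that ``$v$ has no neighbor in $V_l$'' really says $v$ is non-adjacent to \emph{every} vertex of $V_l$; hence once $v$ is relocated into $V_l$ it is a universal non-neighbor there, which both keeps $v$ movable back out of $V_l$ and forces any vertex shuffled out of $V_l$ to be non-adjacent to $v$.

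When $a=2$ we have $i=2$ and $l=1$: relocating $v$ into $V_1$ rebalances the class sizes automatically, so $V_2$ becomes the new deficient class; then $v$ witnesses an arc $V_1\to V_2$ (it has no neighbor in $V_2\setminus\{v\}$, which is independent) and the now-liberated $u$ witnesses $V_j\to V_2$, giving at least $3>2$ accessible classes. When $a=3$, relocate $v$ into $V_l$ and rebalance by moving the witness of a $V_l,V_1$-path of $\HH\setminus V_i$ (one exists since $V_i$ is terminal); because $|\A|=3$, such a path must be the single arc $V_l\to V_1$, so exactly one vertex $w_0\in V_l$ (hence $w_0$ non-adjacent to $v$) moves into $V_1$, and $V_i$ becomes the new deficient class. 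One then checks three arcs of the new digraph: $V_j\to V_i$ (liberated $u$), $V_l\to V_i$ ($v$), and $V_1\to V_l$ ($w_0$); the last two compose to $V_1\to V_l\to V_i$, so we get at least $4>3$ accessible classes. This is exactly where $l=1$ must be excluded: with $l=1$ there is no spare third accessible class to play the role of $V_l$, and the third class cannot be kept accessible.

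When $a=4$ the case analysis of surviving arcs becomes awkward, and instead I would invoke Claim~\ref{strong-comp}: since $b=r-4$, the strong component $\C$ is all of $\B$, so $\HH[\B]$ is strongly connected. Here relocate $v$ into $V_l$, move $u$ into $V_i$ (restoring $V_i$ to size $s$), and, if $l\neq1$, rebalance along a $V_l,V_1$-path of $\HH\setminus V_i$; all of this takes place inside $\A$ except for deleting $u$ from $V_j$, so among non-accessible classes only $V_j$ changes — and it only loses a vertex. Consequently every arc of $\HH[\B]$ directed into $V_j$ persists, and by strong connectedness every class of $\B$ still reaches $V_j$ in the new digraph. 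Since $V_j$ is now the deficient (base) class and $u\in V_i$ witnesses $V_i\to V_j$, the new digraph has at least $b+1=r-3\ge 10$ accessible classes, far exceeding $a=4$.

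In each case one must also check that the recolored $H$ is a proper equitable coloring and extends to one of $G\setminus\{xy\}$, but this is routine, exactly as in the earlier claims. The genuine difficulty — and the step I expect to be the main obstacle — is the accessibility bookkeeping in the modified digraph, namely verifying which arcs of $\HH$ survive each relocation and shuffle; this is delicate for $a=3$ (where one leans on the fact that $\HH[\A]$ has only three classes and on terminality of $V_i$), whereas for $a=4$ the point is to notice that the hypothesis $\C=\B$ lets the strong component do all the work.
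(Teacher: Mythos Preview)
Your proposal is correct and follows essentially the same approach as the paper: in every case you relocate $v$ into $V_l$, rebalance along a $V_l,V_1$-path in $\HH\setminus V_i$ (using terminality), and then count accessible classes in the new digraph via the arcs $V_l\to V_i$ (witness $v$), $V_j\to V_i$ (witness $u$), and for $a=3$ also $V_1\to V_l$ (witness $w_0$); for $a=4$ you invoke the strong connectedness of $\HH[\B]$ from Claim~\ref{strong-comp}. The only difference is that in the $a=4$ case you additionally move $u$ into $V_i$, making $V_j$ the deficient class rather than $V_i$; this works (arcs of $\HH[\B]$ into $V_j$ persist, as you note), but it is an unnecessary extra step---the paper leaves $u$ in $V_j$, so $V_i$ is deficient, $u$ witnesses $V_j\to V_i$ directly, and since no $\B$-class changes at all, the strong-connectedness argument is immediate.
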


\begin{proof}

\begin{figure}[!h]
\centering
\begin{subfigure}{0.55\textwidth}
\begin{tikzpicture}[scale=0.7,every node/.style={scale=0.7}]
\draw[thick] (3,0) edge[red] (6,0) (2.3,0) edge[blue,decoration={markings, mark=at position 0.6 with {\arrow{Stealth[length=10pt,width=10pt]}}},postaction={decorate}] (0.7,0) (0.313,-0.626) edge[blue, decoration={markings, mark=at position 0.6 with {\arrow{Stealth[length=10pt,width=10pt]}}},postaction={decorate}] (1.187,-2.374);
\draw[thick] (0,0) node[uStyle] {} (3,0) node[uStyle] {} (6,0) node[uStyle] {} (0,-0.3) node {$w$} (3,-0.3) node {$v$} (6,-0.3) node {$u$} (0,-1) node {$V_l$} (3,-1) node {$V_i$} (1.5,-4) node {$V_1$} (6,-1) node {$V_j$} (1.5,-4.8) node {$\A$} (6,-4.8) node {$\B$};
\draw[thick] (0,0) circle (0.7cm) (3,0) circle (0.7cm) (1.5,-3) circle (0.7cm) (6,0) circle (0.7cm);
\draw[thick] (-1,1.5) rectangle (7,-4.5) (4.5,1.5) -- (4.5,-4.5);
\end{tikzpicture}
\end{subfigure}%
\begin{subfigure}{0.35\textwidth}
\begin{tikzpicture}[scale=0.7,every node/.style={scale=0.7}]
\draw[thick] (0,0) edge[red, bend left=45] (6,0) (5.3,0) edge[blue,decoration={markings, mark=at position 0.9 with {\arrow{Stealth[length=10pt,width=10pt]}}},postaction={decorate}] (3.7,0) (0.7,0) edge[blue,decoration={markings, mark=at position 0.6 with {\arrow{Stealth[length=10pt,width=10pt]}}},postaction={decorate}] (2.3,0) (1.187,-2.374) edge[blue, decoration={markings, mark=at position 0.6 with {\arrow{Stealth[length=10pt,width=10pt]}}},postaction={decorate}] (0.313,-0.626);
\draw[thick] (0,0) node[uStyle] {} (1.5,-3) node[uStyle] {} (6,0) node[uStyle] {} (1.5,-3.3) node {$w$} (0,-0.3) node {$v$} (6,-0.3) node {$u$} (0,-1) node {$V_l$} (3,-1) node {$V_i$} (1.5,-4) node {$V_1$} (6,-1) node {$V_j$} (1.5,-4.8) node {$\A$} (6,-4.8) node {$\B$};
\draw[thick] (0,0) circle (0.7cm) (3,0) circle (0.7cm) (1.5,-3) circle (0.7cm) (6,0) circle (0.7cm);
\draw[thick] (-1,1.5) rectangle (7,-4.5) (4.5,1.5) -- (4.5,-4.5);
\end{tikzpicture}
\end{subfigure}
\caption{The red lines are edges of $H$ and the blue lines are arcs of $\HH$. The case $a=3$ in Claim~\ref{solo-vrt-has-nbr} before moving $v$ and $w$ (left) and after moving $v$ and $w$ (right).}
\label{solo-vrt-has-nbr-fig}
\end{figure}
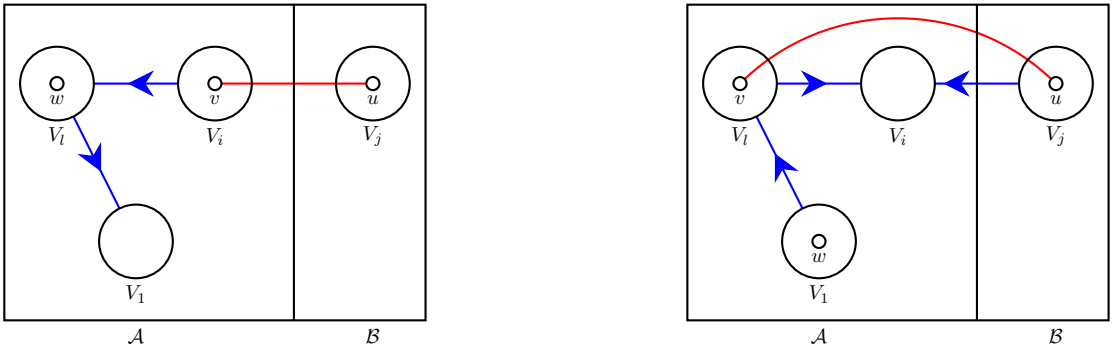

Note that $i\neq1$ since $V_i\in\T$. Let $u\in V_j$ be a solo neighbor of $v$. For $a\in\{2,4\}$ (resp. $a=3$), assume there exists $l\in[a]\setminus\{i\}$ (resp. $l\in[a]\setminus\{1,i\}$) such that $v$ has no neighbors in $V_l$. Since $V_i\in\T$, there exists a $V_l,V_1$-path $P$ in $\HH$ that does not contain $V_i$. Move $v$ to $V_l$ and move the witness of each arc in $P$ from its home to its target class. Call this new coloring $\vph'$. Now $|V_i\setminus\{v\}|=s-1$ and there is a $V_l(V_i\setminus\{v\})$ arc in $\HH$. Moreover, there is a $V_j(V_i\setminus\{v\})$ arc in $\HH$ since $u$ is a solo neighbor of $v$. In particular, $V_l$ and $V_j$ are accessible with respect to $V_i\setminus\{v\}$.

If $a=2$ in $\vph$, then $a\ge3$ in $\vph'$ which contradicts the maximality of $a$.

If $a=3$ in $\vph$, then observe that $P$ is an arc; see Figure~\ref{solo-vrt-has-nbr-fig}. Let $w$ be the witness of $P$. Since $v$ is movable to $V_l$, we have $vw\not\in E(H)$. So, there is a $V_1V_l$ arc in $\vph'$. This implies $V_1$ is also accessible with respect to $V_i\setminus\{v\}$ and $a\ge4$ in $\vph'$ which contradicts the maximality of $a$.

If $a=4$ in $\vph$, then $b=r-4\ge13-4=9$. Moreover, $\HH[\B]$ is strongly connected, by Claim~\ref{strong-comp}. So, there exists a $V_m,(V_i\setminus\{v\})$-path for every $a+1\le m\le r$ since $V_j(V_i\setminus\{v\})$ is an arc in $\vph'$. But now $a\ge11$ in $\vph'$ which contradicts the maximality of $a$.
\end{proof}

\begin{claim}
\label{solo-vrt-has-nbr-V1}
Fix $v\in V_i$ with $V_i\in\T$. If $a=3$ and $q(v)\ge7$, then $v$ has at least one neighbor in $V_1$.
\end{claim}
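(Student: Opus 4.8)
Suppose, for a contradiction, that $v$ has no neighbor in $V_1$. Since $V_i\in\T$ forces $i\neq1$ and $a=3$, we may assume by symmetry that $i=2$, so $\A=\{V_1,V_2,V_3\}$ with $V_2\in\T$ and $V_3$ accessible. Because $v$ has no neighbor in $V_1$, let $\vph'$ be obtained from $\vph$ by moving $v$ from $V_2$ to $V_1$; its deficient class is $V_2\setminus\{v\}$, which now plays the role of ``$V_1$'' in Definition~\ref{digraph-def}, so accessibility in $\vph'$ is measured with respect to $V_2\setminus\{v\}$. Three classes of $\vph'$ are immediately accessible: the deficient class itself; $V_1\cup\{v\}$, witnessed by $v$, which has no neighbor in $V_2\setminus\{v\}$ because $V_2$ is independent; and, for each solo neighbor $u$ of $v$, the class of $u$, since $v$ was the only neighbor of $u$ in $V_2$ and hence $u$ is now movable to $V_2\setminus\{v\}$. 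So it suffices to exhibit a fourth accessible class of $\vph'$, as that contradicts the maximality of $a$.

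The second ingredient is Claim~\ref{strong-comp}: a strong component $\C$ of $\HH[\B]$ with $|\C|\ge r-4$. Since $b=r-a=r-3$, at most one class of $\B$ lies outside $\C$; and moving $v$ between two classes of $\A$ changes no arc of $\HH[\B]$, so $\C$ remains strong in $\vph'$. Hence if a solo neighbor of $v$ lies in a class of $\C$ — or, more generally, if some class of $\C$ has an arc in $\vph'$ into a class of $\B$ that contains a solo neighbor of $v$ — then that class is accessible in $\vph'$, and by strong connectivity so is every class of $\C$, whence $a(\vph')\ge r-4\ge9$, a contradiction. We may therefore assume that all $q(v)\ge7$ solo neighbors of $v$ lie in a single class $V_m$, that $V_m$ is the unique class of $\B$ outside $\C$ (so $|\C|=r-4$), and that no class of $\C$ has an arc to $V_m$. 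Because $\B$ is non-accessible, every vertex of $B$ has a neighbor in each of $V_1$, $V_2$, $V_3$; and because there is no arc from $\C$ to $V_m$, every vertex lying in a class of $\C$ has a neighbor in $V_m$. It then follows that in $\vph'$ no class of $\C$ can reach $V_2\setminus\{v\}$ — its out-arcs, which are unchanged, stay within $\B$ and none reaches $V_m$ — so the only remaining candidate for a fourth accessible class is $V_3$, and $V_3$ is accessible in $\vph'$ exactly when it has an arc to $V_1\cup\{v\}$, to $V_2\setminus\{v\}$, or to $V_m$: that is, when some witness of the arc $V_3\to V_1$ is non-adjacent to $v$, or some vertex of $V_3$ has no neighbor in $V_2\setminus\{v\}$, or some vertex of $V_3$ has no neighbor in $V_m$.

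The crux — and the step I expect to be the main obstacle — is to rule out the remaining configuration, in which every witness of $V_3\to V_1$ is adjacent to $v$ and every vertex of $V_3$ has a neighbor both in $V_2\setminus\{v\}$ and in $V_m$. Here I would use the slack supplied by $q(v)\ge7$ (and Claim~\ref{q-q'}, so that at least $q(v)-3$ of these solo neighbors are nice) to run a second recoloring: move a suitably chosen witness $w_3$ of $V_3\to V_1$ together with $v$ so that $V_3\setminus\{w_3\}$ becomes deficient, with $v$ witnessing $(V_1\cup\{v\})\to(V_3\setminus\{w_3\})$, with $w_3$ (independent from $V_3$) witnessing its class to $V_3\setminus\{w_3\}$, and with a solo neighbor of $v$ non-adjacent to $w_3$ keeping $V_m$ accessible — again four accessible classes. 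In the residual sub-cases where multiplicities obstruct this (for example, $v$ has several neighbors in $V_3$, or $w_3$ is adjacent to every solo neighbor of $v$), I would instead feed the structural inventory assembled above — every vertex of $B$ has at least three neighbors in $A$; each witness of $V_3\to V_1$ has at least two neighbors in $V_2$; every vertex in a class of $\C$ has a neighbor in $V_m$; $v$ has at least seven neighbors in $V_m$; and $V_m$ is independent — into the edge bounds of Lemma~\ref{edges-lem}, applied both to $G$ and to the bipartite subgraph between $A$ and $B$, to reach a contradiction. Extracting the last edge, which is precisely where the hypothesis $q(v)\ge7$ (rather than merely $q(v)\ge1$) is indispensable, is the delicate part of the argument.
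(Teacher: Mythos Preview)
Your reduction to the case $Q(v)\subseteq V_m$ (the paper's $V_4$) is correct and matches the paper exactly: move $v$ to $V_1$, observe that $V_2\setminus\{v\}$, $V_1\cup\{v\}$, and any class in $\C$ containing a solo neighbor of $v$ are accessible, and use the strong component $\C$ to force all of $Q(v)$ into the at most one class of $\B$ outside $\C$.

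The gap is the ``crux.'' Your proposed second recoloring does not parse: if you move both $v$ and a witness $w_3$ of $V_3\to V_1$ into $V_1$, the class sizes are $s+1,\,s-1,\,s-1$, which is not a nearly-equitable coloring; if you move $w_3$ to $V_1$ and $v$ to $V_3\setminus\{w_3\}$, the deficient class is $V_2\setminus\{v\}$, not $V_3\setminus\{w_3\}$, and then $V_1\cup\{w_3\}$ need not be accessible (you yourself assumed $w_3$ has a neighbor in $V_2\setminus\{v\}$, and $w_3$ is adjacent to $v$ now sitting in $V_3$). Your edge-counting fallback also does not close: the constraints you list give at most $(r-1)s$ edges incident to $V_m$, which is below both the degree bound $rs$ and the bipartite 1-planar bound $3(rs-1)-6$, so no contradiction arises. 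The hypothesis $q(v)\ge 7$ contributes only a constant to these counts and cannot overcome the $\Theta(rs)$ slack.

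The idea you are missing is that the paper does \emph{not} try to make $V_3$ accessible or to count edges. Instead, once $Q(v)\subseteq V_4$, it uses $q(v)\ge 7$ to bound $f_{V_4}(v)\le r-4.5$, and then by averaging (Observation~\ref{weight-sum}) finds a \emph{second} vertex $w\in V_2$ with $f_{V_4}(w)>r-3.5$. This $w$ is ordinary (witnessed by $v$), has many solo neighbors \emph{outside} $V_4$, and via Claims~\ref{q-q'}--\ref{B'-nice-solo} is forced to satisfy $\B'(w)=\{V_4\}$ and $q'(w)\le 6$, producing a non-nice solo neighbor $z_0\in\U$ with at most five neighbors in $V_4$. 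Now $q(v)\ge 7$ is used again to pick $z_1,z_2\in Q(v)\subseteq V_4$ non-adjacent to $z_0$, and a single recoloring (moving $w\to V_4$, $v\to V_k\in\B'(v)\cap\U$, $z_1,z_0\to V_2$, and witnesses along a $V_k,V_r$-path in $\U$) yields $a\ge 4$. The interplay between $v$ and the auxiliary vertex $w$ is the engine of the argument; working with $v$ and $V_3$ alone does not give enough leverage.
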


\begin{proof}[Proof of Claim~\ref{solo-vrt-has-nbr-V1}]
Assume $v$ is movable to $V_1$. Note that $i\neq1$ since $V_i\in\T$; by symmetry, say $i=2$. Since $a=3$ and $r\ge13$, we have $b=r-3\ge10$. Recall that $\HH[\B]$ contains a strong component $\U$ with $|\U|\ge r-4$. By symmetry, let $V_5,V_6,\dots,V_r\in V(\U)$; see Figure~\ref{solo-vrt-has-nbr-V1-fig}. 

Suppose there exists $u\in Q(v)$ with $u\in V_j$ for some $5\le j\le r$. Using a similar argument to Claim~\ref{solo-vrt-has-nbr}, move $v$ to $V_1$ and call the new coloring $\vph'$. Note that $|V_2\setminus\{v\}|=s-1$ in $\vph'$ and both $(V_1\cup\{v\})(V_2\setminus\{v\})$ and $V_j(V_2\setminus\{v\})$ are arcs in $\vph'$; that is, both are accessible with respect to $V_2\setminus\{v\}$. Moreover, since $\U$ is a strong component and $V_j\in V(\U)$, there is a $V_m,(V_2\setminus\{v\})$-path for every $m\in\{5,6,\dots,r\}\setminus\{j\}$. So, $a\ge11$ in $\vph'$ which contradicts the maximality of $a$. Thus, assume $Q(v)\subseteq V_4$.

\begin{figure}[!h]
\centering
\begin{subfigure}{0.55\textwidth}
\begin{tikzpicture}[scale=0.7,every node/.style={scale=0.7}]
\draw[thick, dashed, red] (6,-4) edge[bend left=45] (5.8,1.8) (6,-4) edge[bend left=40] (5.8,1.2);
\draw[thick] (0,0.3) edge[red, bend left=45] (3,0.3) (0,-0.3) edge[red, bend left=45] (3,-0.3) (3,0.3) edge[red] (5.8,1.8) (3,0.3) edge[red] (5.8,1.2) (3,-0.3) edge[red] (6,-4) (2.687,-0.626) edge[blue,decoration={markings, mark=at position 0.6 with {\arrow{Stealth[length=10pt,width=10pt]}}},postaction={decorate}] (1.813,-2.374) (0.313,-0.626) edge[blue, decoration={markings, mark=at position 0.6 with {\arrow{Stealth[length=10pt,width=10pt]}}},postaction={decorate}] (1.187,-2.374) (6,-2.2) edge[double, double distance= 0.5mm, blue, decoration={markings, mark=at position 0.7 with {\arrow{Stealth[length=12pt,width=12pt]}}},postaction={decorate}] (6,-3.3);
\draw[thick] (3,0.3) node[uStyle] {} (3,-0.3) node[uStyle] {} (5.8,1.8) node[uStyle] {} (5.8,1.2) node[uStyle] {} (6,-4) node[uStyle] {} (0,0.3) node[uStyle] {} (0,-0.3) node[uStyle] {} (2.7,0.3) node {$v$} (2.7,-0.3) node {$w$} (6.15,1.8) node {$z_1$} (6.15,1.2) node {$z_2$} (6,-4.3) node {$z_0$} (0,-1) node {$V_3$} (3,-1) node {$V_2$} (1.5,-4) node {$V_1$} (6,-0.6) node {$V_5\in\B'(v)$} (6,2.6) node {$V_4=\B'(w)$} (6,-5) node {$V_r$} (1.5,-5.8) node {$\A$} (6,-5.8) node {$\B$} (6,0.5) node {$Q(v)$};
\draw[thick] (0,0) circle (0.7cm) (3,0) circle (0.7cm) (1.5,-3) circle (0.7cm) (6,1.3) ellipse (1cm and 1.1cm) (6,1.5) ellipse (0.5cm and 0.7cm) (6,-1.5) circle (0.7cm) (6,-4) circle (0.7cm);
\draw[thick] (-1,3.2) rectangle (7.5,-5.5) (4.5,3.2) -- (4.5,-5.5);
\end{tikzpicture}
\end{subfigure}%
\begin{subfigure}{0.35\textwidth}
\begin{tikzpicture}[scale=0.7,every node/.style={scale=0.7}]
\draw[thick] (0,-0.3) edge[red, bend right=12] (6,-1.5) (0,0.3) edge[red] (5.8,1.8) (3,0.3) edge[red] (5.8,1.8) (6,-1.5) edge[red, bend left=70, looseness=1.5] (5.8,1.2) (3,-0.3) edge[red] (6,-1.5) (5.336,-1.721) edge[blue,decoration={markings, mark=at position 0.6 with {\arrow{Stealth[length=10pt,width=10pt]}}},postaction={decorate}] (2.164,-2.779) (0.313,-0.626) edge[blue, decoration={markings, mark=at position 0.6 with {\arrow{Stealth[length=10pt,width=10pt]}}},postaction={decorate}] (1.187,-2.374) (6,-3.3) edge[double, double distance= 0.5mm, blue, decoration={markings, mark=at position 0.7 with {\arrow{Stealth[length=12pt,width=12pt]}}},postaction={decorate}] (6,-2.2);
\draw[thick] (3,0.3) node[uStyle] {} (3,-0.3) node[uStyle] {} (5.8,1.8) node[uStyle] {} (5.8,1.2) node[uStyle] {} (6,-1.5) node[uStyle] {} (0,0.3) node[uStyle] {} (0,-0.3) node[uStyle] {} (2.65,0.3) node {$z_0$} (2.65,-0.3) node {$z_1$} (6.12,1.8) node {$w$} (6.15,1.2) node {$z_2$} (6,-1.8) node {$v$} (0,-1) node {$V_3$} (3,-1) node {$V_2$} (1.5,-4) node {$V_1$} (6,-0.6) node {$V_5\in\B'(v)$} (6,2.6) node {$V_4=\B'(w)$} (6,-5) node {$V_r$} (1.5,-5.8) node {$\A$} (6,-5.8) node {$\B$} (6,0.5) node {$Q(v)$};
\draw[thick] (0,0) circle (0.7cm) (3,0) circle (0.7cm) (1.5,-3) circle (0.7cm) (6,1.3) ellipse (1cm and 1.1cm) (6,1.5) ellipse (0.5cm and 0.7cm) (6,-1.5) circle (0.7cm) (6,-4) circle (0.7cm);
\draw[thick] (-1,3.2) rectangle (7.5,-5.5) (4.5,3.2) -- (4.5,-5.5);
\end{tikzpicture}
\end{subfigure}
\caption{The red lines and red dashed lines are edges and non-edges of $H$, respectively. The blue lines and blue double lines are arcs and paths of $\HH$, respectively. An example of $\vph$ (left) and $\vph'$ (right) in Claim~\ref{solo-vrt-has-nbr-V1}.}
\label{solo-vrt-has-nbr-V1-fig}
\end{figure}
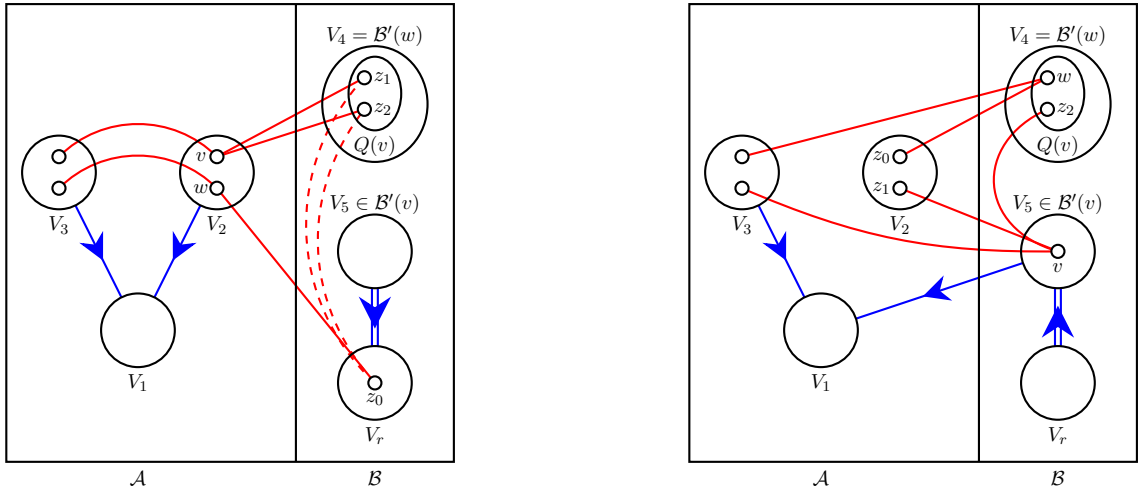

Note that $v$ has at least one neighbor in $V_3$, by Claim~\ref{solo-vrt-has-nbr}. By Observation~\ref{weight-sum}, we have $\frac{\sum_{w\in V_2}f_{V_4}(w)}{|V_2|}=\frac{s(r-4)+0.5s}{s}=r-3.5$. Further, since $Q(v)\subseteq V_4$ and $q(v)\ge7$, we have $f_{V_4}(v)\le (r-1)-\frac{7}{2}=r-4.5<r-3.5$. Thus, there exists $w\in V_2$ with $f_{V_4}(w)>r-3.5$.

Observe that the existence of $v$ implies $w$ is ordinary. Furthermore, $w$ is a solo vertex; otherwise, $f_{V_4}(w)\le \frac{r}{2}<r-3.5$. So, $w$ has at least one neighbor in $V_3$, by Claim~\ref{solo-vrt-has-nbr}. Let $m=|Q(w)\setminus V_4|$. If $m\le r-6$, then $f_{V_4}(w)\le m+0.5(r-m-1)\le r-6+0.5(6)-0.5=r-3.5$, a contradiction. So, assume $m\ge r-5$. Note that $m\ge8$ since $r\ge13$. This implies $q'(w)\ge5$, by Claim~\ref{q-q'}. Moreover, every $z\in Q'(w)$ is not the only neighbor of $w$ in its class, by Claim~\ref{two-solo-nbrs}. Thus, $|\B'(w)|\ge r-3-((r-1)-\lceil\frac{5}{2}\rceil)=1$.

Since $q'(w)\ge5$, if $Q'(w)\subseteq V_4$, then $f_{V_4}(w)\le r-1-\frac{5}{2}=r-3.5$, a contradiction. So, some $z\in Q'(w)$ is in $B\setminus V_4$. If there exists $V_j\in\B'(w)$ with $5\le j\le r$, then the class of $z$ is reachable from $V_j$ since $\U$ is strongly connected. However, this contradicts Claim~\ref{B'-nice-solo} (where $w$ and $z$ play the roles of $v$ and $u$ in Claim~\ref{B'-nice-solo}). Thus, assume $\B'(w)=V_4$. 

Observe that if $q'(w)\ge7$, then $|\B'(w)|\ge r-3-((r-1)-\lceil\frac{7}{2}\rceil)=2$ which contradicts $\B'(w)=V_4$. So, $5\le q'(w)\le 6$. Since $m\ge8$, there exists $z_0\in Q(w)\setminus Q'(w)$. By symmetry, let $V_r$ be the class of $z_0$ in $\U$; see Figure~\ref{solo-vrt-has-nbr-V1-fig}. Since $z_0\in Q(w)\setminus Q'(w)$, $\B'(w)=V_4$, and $m\ge r-5$, we have that $z_0$ is adjacent to $w$ and to at least $r-6$ vertices in $B\setminus V_4$. So, $z_0$ has at most 5 neighbors in $V_4$.  

Recall that $d(v)\le r$, $Q(v)\subseteq V_4$, and $q(v)\ge7$. This implies that there exists $z_1,z_2\in Q(v)$ such that $z_0$ is nonadjacent to both $z_1$ and $z_2$. Moreover, $\B'(v)\subseteq V(\U)$ and $|\B'(v)|\ge r-4-((r-1)-7)=4$. So, there exists $V_k\in \B'(v)$ with $k\neq r$; say $k=5$.

Since $\U$ is strongly connected, there is a $V_5,V_r$-path $P$ in $\HH$. Now move $w$ to $V_4$, move $v$ to $V_5$, move $z_1$ to $V_2$, move the witness of each arc of $P$ from its home to its target class, and move $z_0$ to $V_2$. Call the new coloring $\vph'$; see Figure~\ref{solo-vrt-has-nbr-V1-fig}. Observe that $V_3$ is still accessible since $V_2\in\T$. Also, each arc in $P$ is now reversed in $\vph'$, so there is a $V_r,V_5$-path $P'$ in $\vph'$. Observe that $V_5$ is accessible since, by assumption, $v$ is movable to $V_1$; thus, each class in $P'$ is also accessible. Now $a\ge4$ in $\vph'$ which contradicts the maximality of $a$. 
\end{proof}

\begin{claim}
\label{dist-1-class}
If $a\in\{3,4\}$, then we may assume there exists $V_i\in\T$ such that $V_iV_1$ is an arc in $\HH$. 
\end{claim}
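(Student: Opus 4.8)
\emph{Setup and reduction.} Since $a\ge 3$, the class $V_1$ has at least one in-neighbor in $\HH$ (take the penultimate class of any $V_j,V_1$-path with $j\neq 1$), and every in-neighbor of $V_1$ clearly lies in $\A$. If some in-neighbor of $V_1$ is terminal, the claim holds at once. So the plan is: assuming no in-neighbor of $V_1$ is terminal, move a single vertex to produce a new equitable coloring $\vph'$ of $H$ that still maximizes $a$ and in which $V_1$ \emph{does} have a terminal in-neighbor. Since the only properties of $\vph$ we rely on are that it is an equitable coloring of $H$ with $a$ maximum, we may then replace $\vph$ by $\vph'$, which proves the claim.

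\emph{Structure of the bad case.} Assume now that no in-neighbor of $V_1$ is terminal. The key observation is that no $V_j,V_1$-path can pass through a class of $\B$, since such a class would itself reach $V_1$, contradicting $\B\cap\A=\emptyset$; hence all paths to $V_1$ live inside $\HH[\A]$, a digraph on only $3$ or $4$ vertices. A short case analysis on $|\A|$ then forces $V_1$ to have exactly one in-neighbor: if two or more classes were in-neighbors of $V_1$, then either every accessible class other than $V_1$ is such an in-neighbor (which happens when $a=3$, or when $a=4$ with $\ge 3$ in-neighbors), in which case deleting any one of them keeps the rest pointing to $V_1$ directly and so makes the deleted class terminal — a contradiction; or $a=4$ with exactly two in-neighbors $V_2,V_3$, in which case the fourth class $V_4$ would have to reach $V_1$ through both $V_2$ and $V_3$, i.e.\ along $V_4\to V_2\to V_3\to V_1$ or $V_4\to V_3\to V_2\to V_1$, each of which contains a shortcut to $V_1$ bypassing one of $V_2,V_3$ (using that $V_2V_1$ and $V_3V_1$ are arcs) — again a contradiction. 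So let $V_2$ be the unique in-neighbor of $V_1$. Then every $V_m\in\A\setminus\{V_1\}$ reaches $V_1$ along a path whose last arc is $V_2V_1$; deleting that arc, every $V_m\in\A\setminus\{V_1,V_2\}$ reaches $V_2$ along a path that avoids $V_1$ and stays inside $\A$.

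\emph{The swap.} Let $p\in V_2$ witness the arc $V_2V_1$, so $N_{V_1}(p)=\emptyset$, and note $N_{V_2}(p)=\emptyset$ as well since $p\in V_2$. Move $p$ from $V_2$ to $V_1$ and call the result $\vph'$; it is a proper equitable coloring of $H$ whose unique class of size $s-1$ is now $V_2':=V_2\setminus\{p\}$, which takes over the role of $V_1$. Deleting $p$ from $V_2$ only creates arcs entering $V_2$ and destroys none, and the classes of $\A\setminus\{V_1,V_2\}$ are untouched, so each of the paths found above from some $V_m\in\A\setminus\{V_1,V_2\}$ to $V_2$ survives in $\HH(\vph')$; together with the new arc from $V_1\cup\{p\}$ to $V_2'$ witnessed by $p$ (here is where $N_{V_2}(p)=\emptyset$ is used), this makes $V_2'$, $V_1\cup\{p\}$, $V_3$, and (when $a=4$) $V_4$ all accessible in $\vph'$. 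Hence $a(\vph')\ge a(\vph)$, so $a(\vph')=a(\vph)$ by maximality, and these are exactly the accessible classes of $\vph'$. Finally $V_1\cup\{p\}$ is terminal in $\HH(\vph')$ — every other accessible class reaches $V_2'$ by a path avoiding $V_1\cup\{p\}$ — and it has an arc to $V_2'$. Thus $\vph'$ is as required.

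\emph{Main obstacle.} I expect the delicate part to be the arc-bookkeeping in the last paragraph — in particular the observation that, once moved into $V_1$, the vertex $p$ witnesses the \emph{reverse} arc to $V_2'$, precisely because $p$ had no neighbor in $V_2$ — together with the short but slightly fussy case check (leaning on $a\le 4$ and on the fact that $\B$-classes never lie on paths to $V_1$) that in the bad case $V_1$ has a unique in-neighbor.
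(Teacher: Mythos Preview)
Your proof is correct and follows essentially the same approach as the paper: identify an in-neighbor $V_2$ of $V_1$, move its witness $p$ into $V_1$, and verify that in the new coloring $V_1\cup\{p\}$ is a terminal in-neighbor of the new deficient class $V_2\setminus\{p\}$. The only organizational difference is that you first argue (by a short case analysis on $a\le 4$) that in the bad case $V_1$ has a \emph{unique} in-neighbor, which streamlines the final verification; the paper instead chooses $V_2$ as the penultimate class on a longest shortest path to $V_1$ and handles the possible extra in-neighbor of $V_1$ (when $a=4$) at the end by observing it would itself be terminal.
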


\begin{proof}[Proof of Claim~\ref{dist-1-class}]
Recall that $\A$ contains a spanning in-tree $\I$ with sink $V_1$, by definition. We will show that either (i) $V_1$ has an in-neighbor $V_i$ that is a leaf in $\I$, or (ii) we can reverse an arc $V_jV_1$ in $\HH$, i.e.~move its witness $v$ from $V_j$ to $V_1$, to get a new coloring $\vph'$ and new in-tree $\I'$ with sink $V_j\setminus\{v\}$ where $V_j\setminus\{v\}$ has an in-neighbor that is a leaf in $\I'$. Note that if (i) is true, then $V_i$ satisfies the claim. Further, if (ii) is true and $\vph'$ has the same $a$ value as $\vph$, then $\vph'$ and $V_1\cup\{v\}$ satisfy the claim.


Pick a class whose shortest path $P$ to $V_1$ has maximum length, say $V_3$. Observe that $V_3\in\T$. If $V_3V_1$ is an arc in $\vph$, then $V_3$ satisfies the claim. So, assume otherwise. This implies $P$ has length at least 2. By symmetry, let $V_2$ be the class before last in $P$, i.e. $V_2V_1$ is an arc in $\vph$, and let $v$ be the witness of that arc. Move $v$ to $V_1$ and call this new coloring $\vph'$. Now $|V_2\setminus\{v\}|=s-1$, and $V_2\setminus\{v\}$ is a sink in an in-tree $\I'$ of $\vph'$. Moreover, every class in $P$ is accessible with respect to $V_2\setminus\{v\}$ in $\vph'$. 

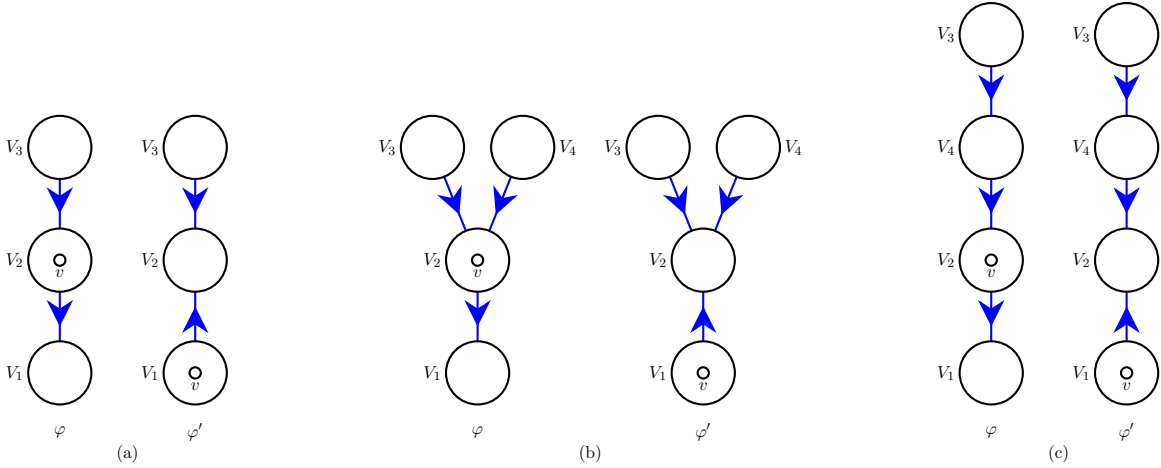
\begin{figure}[!h]
\centering
\begin{subfigure}{0.3\textwidth}
\begin{tikzpicture}[scale=0.6,every node/.style={scale=0.6}]
\begin{scope}
\draw[thick] (0,0.3) edge[blue, decoration={markings, mark=at position 0.7 with {\arrow{Stealth[length=10pt,width=10pt]}}},postaction={decorate}] (0,-0.8) (0,-2.2) edge[blue, decoration={markings, mark=at position 0.7 with {\arrow{Stealth[length=10pt,width=10pt]}}},postaction={decorate}] (0,-3.3);
\draw[thick] (0,-1.5) node[uStyle] {} (0,-1.8) node {$v$} (-1,1) node {$V_3$} (-1,-1.5) node {$V_2$} (-1,-4) node {$V_1$} (0,-5.3) node {$\vph$};
\draw[thick] (0,1) circle (0.7cm) (0,-1.5) circle (0.7cm) (0,-4) circle (0.7cm);
\end{scope}
\begin{scope}[xshift=3cm]
\draw[thick] (0,0.3) edge[blue, decoration={markings, mark=at position 0.7 with {\arrow{Stealth[length=10pt,width=10pt]}}},postaction={decorate}] (0,-0.8) (0,-3.3) edge[blue, decoration={markings, mark=at position 0.7 with {\arrow{Stealth[length=10pt,width=10pt]}}},postaction={decorate}] (0,-2.2);
\draw[thick] (0,-4) node[uStyle] {} (0,-4.3) node {$v$} (-1,1) node {$V_3$} (-1,-1.5) node {$V_2$} (-1,-4) node {$V_1$} (0,-5.3) node {$\vph'$};
\draw[thick] (0,1) circle (0.7cm) (0,-1.5) circle (0.7cm) (0,-4) circle (0.7cm);
\end{scope}
\draw[thick] (1.5,-5.8) node {(a)};
\end{tikzpicture}
\end{subfigure}%
\begin{subfigure}{0.45\textwidth}
\begin{tikzpicture}[scale=0.6,every node/.style={scale=0.6}]
\begin{scope}
\draw[thick] (-0.74,0.35) edge[blue, decoration={markings, mark=at position 0.7 with {\arrow{Stealth[length=10pt,width=10pt]}}},postaction={decorate}] (-0.26,-0.85) (0.74,0.35) edge[blue, decoration={markings, mark=at position 0.7 with {\arrow{Stealth[length=10pt,width=10pt]}}},postaction={decorate}] (0.26,-0.85) (0,-2.2) edge[blue, decoration={markings, mark=at position 0.7 with {\arrow{Stealth[length=10pt,width=10pt]}}},postaction={decorate}] (0,-3.3);
\draw[thick] (0,-1.5) node[uStyle] {} (0,-1.8) node {$v$} (-2,1) node {$V_3$} (2,1) node {$V_4$} (-1,-1.5) node {$V_2$} (-1,-4) node {$V_1$} (0,-5.3) node {$\vph$};
\draw[thick] (-1,1) circle (0.7cm) (1,1) circle (0.7cm) (0,-1.5) circle (0.7cm) (0,-4) circle (0.7cm);
\end{scope}
\begin{scope}[xshift=5cm]
\draw[thick] (-0.74,0.35) edge[blue, decoration={markings, mark=at position 0.7 with {\arrow{Stealth[length=10pt,width=10pt]}}},postaction={decorate}] (-0.26,-0.85) (0.74,0.35) edge[blue, decoration={markings, mark=at position 0.7 with {\arrow{Stealth[length=10pt,width=10pt]}}},postaction={decorate}] (0.26,-0.85) (0,-3.3) edge[blue, decoration={markings, mark=at position 0.7 with {\arrow{Stealth[length=10pt,width=10pt]}}},postaction={decorate}] (0,-2.2);
\draw[thick] (0,-4) node[uStyle] {} (0,-4.3) node {$v$} (-2,1) node {$V_3$} (2,1) node {$V_4$} (-1,-1.5) node {$V_2$} (-1,-4) node {$V_1$} (0,-5.3) node {$\vph'$};
\draw[thick] (-1,1) circle (0.7cm) (1,1) circle (0.7cm) (0,-1.5) circle (0.7cm) (0,-4) circle (0.7cm);
\end{scope}
\draw[thick] (2.5,-5.8) node {(b)};
\end{tikzpicture}
\end{subfigure}%
\begin{subfigure}{0.2\textwidth}
\begin{tikzpicture}[scale=0.6,every node/.style={scale=0.6}]
\begin{scope}
\draw[thick] (0,2.8) edge[blue, decoration={markings, mark=at position 0.7 with {\arrow{Stealth[length=10pt,width=10pt]}}},postaction={decorate}] (0,1.7) (0,0.3) edge[blue, decoration={markings, mark=at position 0.7 with {\arrow{Stealth[length=10pt,width=10pt]}}},postaction={decorate}] (0,-0.8) (0,-2.2) edge[blue, decoration={markings, mark=at position 0.7 with {\arrow{Stealth[length=10pt,width=10pt]}}},postaction={decorate}] (0,-3.3);
\draw[thick] (0,-1.5) node[uStyle] {} (0,-1.8) node {$v$} (-1,1) node {$V_4$} (-1,3.5) node {$V_3$} (-1,-1.5) node {$V_2$} (-1,-4) node {$V_1$} (0,-5.3) node {$\vph$};
\draw[thick] (0,3.5) circle (0.7cm) (0,1) circle (0.7cm) (0,-1.5) circle (0.7cm) (0,-4) circle (0.7cm);
\end{scope}
\begin{scope}[xshift=3cm]
\draw[thick] (0,2.8) edge[blue, decoration={markings, mark=at position 0.7 with {\arrow{Stealth[length=10pt,width=10pt]}}},postaction={decorate}] (0,1.7) (0,0.3) edge[blue, decoration={markings, mark=at position 0.7 with {\arrow{Stealth[length=10pt,width=10pt]}}},postaction={decorate}] (0,-0.8) (0,-3.3) edge[blue, decoration={markings, mark=at position 0.7 with {\arrow{Stealth[length=10pt,width=10pt]}}},postaction={decorate}] (0,-2.2);
\draw[thick] (0,-4) node[uStyle] {} (0,-4.3) node {$v$} (-1,1) node {$V_4$} (-1,3.5) node {$V_3$} (-1,-1.5) node {$V_2$} (-1,-4) node {$V_1$} (0,-5.3) node {$\vph'$};
\draw[thick] (0,3.5) circle (0.7cm) (0,1) circle (0.7cm) (0,-1.5) circle (0.7cm) (0,-4) circle (0.7cm);
\end{scope}
\draw[thick] (1.5,-5.8) node {(c)};
\end{tikzpicture}
\end{subfigure}%
\caption{The different cases of Claim~\ref{dist-1-class}. (a-c): The left figure shows the structure of $\I$ in $\vph$ and the right figure shows the structure of $\I'$ in $\vph'$.}
\label{dist-1-class-fig}
\end{figure}

If $a=3$ in $\vph$, then also $a=3$ in $\vph'$ and $V_1\cup\{v\}$ satisfies the claim; see Figure~\ref{dist-1-class-fig}(a). 

If $a=4$ in $\vph$, then let $P'$ be a $V_4,V_1$-path in $\vph$. If $P'\subset P$ or $P'$ shares a class with $P$ (other than $V_1$), then $V_4$ is accessible with respect to $V_2\setminus\{v\}$; see Figure~\ref{dist-1-class-fig}(b-c). This implies $a=4$ and $V_1\cup\{v\}$ satisfies the claim in $\vph'$. Thus, assume $P'$ is disjoint from $P$ (except for $V_1$). Now $P'$ is an arc $V_4V_1$. But, this implies $V_4$ satisfies the claim in $\vph$.  
\end{proof}

Recall by Claim~\ref{A-B-edges} that $a\le4$. We split the rest of the proof into 4 cases depending on the value of $a$. The proof for each case starts off in a similar manner following the scheme explained at the end of Section~\ref{prelims}. However, note that each subsequent case introduces a new obstacle. This is mainly because, as $a$ gets smaller, 1) the number of classes that (possibly) do not belong to some strong component $\U$ in $\HH[\B]$ increases, and 2) lower bounds for certain quantities decrease. Nevertheless, we provide workarounds for each obstacle.

\textbf{Case 1: $\bm{a=4}$.} This implies $b=r-4\ge9$ since $r\ge13$. By Claim~\ref{dist-1-class}, we may assume there exists a class in $\T$, say $V_2$, such that $V_2V_1$ is an arc in $\HH$. So, $V_2$ contains a vertex $v_1$ movable to $V_1$. This implies $q(v_1)=0$, by Claim~\ref{solo-vrt-has-nbr}. Recall that $d(v)\le r$ for every $v\in V(H)$. Now $f_\emptyset(v_1)\le\frac{r}{2}<r-4$ since $r\ge13$. Moreover, $\frac{\sum_{v\in V_2}f_\emptyset(v)}{|V_2|}=\frac{s(r-4)}{s}=r-4$, by Observation~\ref{weight-sum}. Thus, there exists $v_2\in V_2$ with $f_\emptyset(v_2)>r-4$. 

Observe that the existence of $v_1\in V_2$ implies that $v_2$ is ordinary. Furthermore, $v_2$ is a solo vertex; otherwise, $f_\emptyset(v_2)\le\frac{r}{2}<r-4$, a contradiction. By Claim~\ref{solo-vrt-has-nbr}, there is at least one neighbor of $v_2$ in each of $V_1$, $V_3$, and $V_4$. So, $|N_B(v_2)|\le d(v_2)-3\le r-3$. Note that $f_\emptyset(v_2)\le q(v_2)+0.5|N_B(v_2)\setminus Q(v_2)|=0.5(|N_B(v_2)|+q(v_2))$. Since $f_\emptyset(v_2)>r-4$ and $r\ge13$, we have $|N_B(v_2)|=r-3$ and $q(v_2)\ge r-4\ge9$. This implies $q'(v_2)\ge6$, by Claim~\ref{q-q'}. Furthermore, every $w\in Q'(v_2)$ is not the only neighbor of $v_2$ in its class, by Claim~\ref{two-solo-nbrs}. Thus, $|\B'(v_2)|\ge r-4-((r-3)-\frac{6}{2})=2$. 

Let $w\in Q'(v_2)$ and $V_i\in\B'(v_2)$. Recall that $b=r-4$; by Claim~\ref{strong-comp}, this implies $\HH[\B]$ is strongly connected. Now the class of $w$ is reachable from $V_i$, contradicting Claim~\ref{B'-nice-solo}. 

\textbf{Case 2: $\bm{a=3}$.} This implies $b=r-3\ge10$ since $r\ge13$. By Claim~\ref{dist-1-class}, we may assume there exists a class in $\T$, say $V_2$, such that $V_2V_1$ is an arc in $\HH$. So, $V_2$ contains a vertex $v_1$ movable to $V_1$. Note that $q(v_1)\le6$, by Claim~\ref{solo-vrt-has-nbr-V1}. Now $f_\emptyset(v_1)\le\frac{r}{2}+\frac{6}{2}<r-3$ since $r\ge13$. Moreover, $\frac{\sum_{v\in V_2}f_\emptyset(v)}{|V_2|}=\frac{s(r-3)}{s}=r-3$, by Observation~\ref{weight-sum}. Thus, there exists $v_2\in V_2$ with $f_\emptyset(v_2)>r-3$. 

As in Case 1, again $v_2$ is ordinary because of $v_1$. Furthermore, $q(v_2)\ge7$; otherwise, $f_\emptyset(v_2)\le\frac{r}{2}+\frac{6}{2}<r-3$, a contradiction. By Claims~\ref{solo-vrt-has-nbr} and \ref{solo-vrt-has-nbr-V1}, there is at least one neighbor of $v_2$ in each of $V_1$ and $V_3$. So, $|N_B(v_2)|\le r-2$. Similar to the previous case, $f_\emptyset(v_2)\le 0.5(|N_B(v_2)|+q(v_2))$. Since $f_\emptyset(v_2)>r-3$ and $r\ge13$, we have $|N_B(v_2)|=r-2$ and $q(v_2)\ge r-3\ge10$. This implies $q'(v_2)\ge7$, by Claim~\ref{q-q'}. Furthermore, every $w\in Q'(v_2)$ is not the only neighbor of $v_2$ in its class, by Claim~\ref{two-solo-nbrs}. Thus, $|\B'(v_2)|\ge r-3-((r-2)-\lceil\frac{7}{2}\rceil)=3$. 

Recall that there exists a strong component of $\HH[\B]$, say $\U$, with at least $r-4$ classes. By symmetry, let $V_5,V_6,\dots,V_r\in V(\U)$. Since $|\B'(v_2)|\ge3$, there exists $V_i\in \B'(v_2)$ such that $V_i\in V(\U)$, by Pigeonhole. If some vertex $w$ of $Q'(v_2)$ is in a class of $\U$, then its class is reachable from $V_i$, contradicting Claim~\ref{B'-nice-solo}. Thus, assume $Q'(v_2)\subseteq V_4$. 

By Observation~\ref{weight-sum}, note that $\frac{\sum_{v\in V_2}f_{V_4}(v)}{|V_2|}=\frac{s(r-4)+s(0.5)}{s}=r-3.5$. Recall from above that $q(v_1)\le6$. If $q(v_1)\le5$, then $f_{V_4}(v_1)\le\frac{r}{2}+\frac{5}{2}<r-3.5$ since $r\ge13$. Similarly, if $q(v_1)=6$, then $v_1$ has at least one neighbor in $V_3$, by Claim~\ref{solo-vrt-has-nbr}; so, $f_{V_4}(v_1)\le\frac{r-1}{2}+\frac{6}{2}<r-3.5$. Thus, there exists $v_3\in V_2$ with $f_{V_4}(v_3)>r-3.5$.

Since $Q'(v_2)\subseteq V_4$, $q'(v_2)\ge7$, and $q(v_2)\le r$, we have $f_{V_4}(v_2)\le |Q(v_2)\setminus Q'(v_2)|+0.5q'(v_2)+0.5(r-q(v_2))=0.5(q(v_2)-q'(v_2)+r)\le r-0.5q'(v_2)\le r-3.5$. Furthermore, recall from above that $f_{V_4}(v_1)\le f_\emptyset(v_1)\le \frac{r}{2}+\frac{6}{2}\le r-3.5$. Thus, $v_3\neq v_2$ and $v_3\neq v_1$.

Observe that $v_3$ is ordinary because of $v_1$ and has at least one neighbor in $V_3$, by Claim~\ref{solo-vrt-has-nbr}. Also, $v_3$ has at least one neighbor in $V_1$; otherwise, $q(v_3)\le6$, by Claim~\ref{solo-vrt-has-nbr-V1}, and $f_{V_4}(v_3)\le\frac{r-1}{2}+\frac{6}{2}<r-3.5$, a contradiction. Thus, $|N_B(v_3)|\le r-2$. Let $m:=|Q(v_3)\setminus V_4|$. Now $f_{V_4}(v_3)\le m+0.5(r-m-2)$. If $m\le r-5$, then $f_{V_4}(v_3)\le r-5+0.5(5)-0.5(2)=r-3.5$, a contradiction. So, assume $m\ge r-4$; thus, $m\ge9$ since $r\ge13$. Now $q'(v_3)\ge6$, by Claim~\ref{q-q'}. Moreover, every $w\in Q'(v_3)$ is not the only neighbor of $v_3$ in its class, by Claim~\ref{two-solo-nbrs}. Thus, $|\B'(v_3)|\ge r-3-((r-2)-\frac{6}{2})=2$.

By Pigeonhole, some $V_j\in\B'(v_3)$ is in $V(\U)$. Now if six vertices of $Q'(v_3)$ are in $V_4$, then $f_{V_4}(v_3)\le (r-2)-\frac{6}{2}=r-5<r-3.5$, a contradiction. So, at least one vertex $w\in Q'(v_3)$ is in some class of $\U$. Since $\U$ is strongly connected, the class of $w$ is reachable from $V_j$, contradicting Claim~\ref{B'-nice-solo}. 

\textbf{Case 3: $\bm{a=2}$.} This implies $b=r-2\ge11$ since $r\ge13$. Observe that $V_2\in\T$ and $V_2V_1$ is an arc in $\HH$. So, $V_2$ contains a vertex $v_1$ movable to $V_1$. This implies $q(v_1)=0$, by Claim~\ref{solo-vrt-has-nbr}. Now $f_\emptyset(v_1)\le\frac{r}{2}<r-2$ since $r\ge13$. Moreover, $\frac{\sum_{v\in V_2}f_\emptyset(v)}{|V_2|}=\frac{s(r-2)}{s}=r-2$, by Observation~\ref{weight-sum}. Thus, there exists $v_2\in V_2$ with $f_\emptyset(v_2)>r-2$. 

As above, $v_2$ is ordinary because of $v_1$. Furthermore, $v_2$ is a solo vertex; otherwise, $f_\emptyset(v_2)\le\frac{r}{2}<r-2$, a contradiction. By Claim~\ref{solo-vrt-has-nbr}, there is at least one neighbor of $v_2$ in $V_1$. So, $|N_B(v_2)|\le r-1$. Similar to previous cases, $f_\emptyset(v_2)\le 0.5(|N_B(v_2)|+q(v_2))$. Since $f_\emptyset(v_2)>r-2$ and $r\ge13$, we have $|N_B(v_2)|=r-1$ and $q(v_2)\ge r-2\ge11$. This implies $q'(v_2)\ge8$, by Claim~\ref{q-q'}. Furthermore, every $w\in Q'(v_2)$ is not the only neighbor of $v_2$ in its class, by Claim~\ref{two-solo-nbrs}. Thus, $|\B'(v_2)|\ge r-2-((r-1)-\frac{8}{2})=3$. 

Recall from Claim~\ref{strong-comp} that there exists a strong component of $\HH[\B]$, say $\U$, with at least $r-4$ classes. By symmetry, let $V_5,V_6,\dots,V_r\in V(\U)$. Since $|\B'(v_2)|\ge3$, there exists $V_i\in \B'(v_2)$ such that $V_i\in V(\U)$, by Pigeonhole. If some vertex $w$ of $Q'(v_2)$ is in a class of $\U$, then its class is reachable from $V_i$, contradicting Claim~\ref{B'-nice-solo}. Thus, assume $Q'(v_2)\subseteq (V_3\cup V_4)$. 

Let $\W:=\{V_3,V_4\}$. By Observation~\ref{weight-sum}, note that $\frac{\sum_{v\in V_2}f_{\W}(v)}{|V_2|}=\frac{s(r-4)+2s(0.5)}{s}=r-3$. Moreover, $f_{\W}(v_1)\le\frac{r}{2}<r-3$ since $r\ge13$. Thus, there exists $v_3\in V_2$ with $f_{\W}(v_3)>r-3$. 

Since $Q'(v_2)\subseteq (V_3\cup V_4)$, $q'(v_2)\ge8$, and $q(v_2)\le r$, we have $f_{\W}(v_2)\le |Q(v_2)\setminus Q'(v_2)|+0.5q'(v_2)+0.5(r-q(v_2))=0.5(q(v_2)-q'(v_2)+r)\le r-0.5q'(v_2)\le r-4<r-3$. Furthermore, recall from above that $f_{V_4}(v_1)\le f_\emptyset(v_1)\le \frac{r}{2}< r-3$. Thus, $v_3\neq v_2$ and $v_3\neq v_1$.

Observe that $v_3$ is ordinary because of $v_1$. Further, $v_3$ is a solo vertex; otherwise $f_{\W}(v_3)\le\frac{r}{2}<r-3$, a contradiction. Let $m:=|Q(v_3)\setminus(V_3\cup V_4)|$. Recall that $v_3$ has at least one neighbor in $V_1$, by Claim~\ref{solo-vrt-has-nbr}. So, $f_{\W}(v_3)\le m+0.5(r-m-1)$. If $m\le r-5$, then $f_{\W}(v_3)\le r-5+0.5(5)-0.5=r-3$, a contradiction. So, assume $m\ge r-4$; thus, $m\ge9$ since $r\ge13$. Now $q'(v_3)\ge6$, by Claim~\ref{q-q'}. Also, recall that every $w\in Q'(v_3)$ is not the only neighbor of $v_3$ in its class, by Claim~\ref{two-solo-nbrs}. Thus, $|\B'(v_3)|\ge r-2-((r-1)-\frac{6}{2})=2$.

If at least 6 vertices of $Q'(v_3)$ are in $(V_3\cup V_4)$, then $f_{\W}(v_3)\le r-\frac{6}{2}=r-3$, a contradiction. So, at least one vertex $w\in Q'(v_3)$ is in some class of $\U$. If some $V_j\in\B'(v_3)$ is in $V(\U)$, then the class of $w$ is reachable from $V_j$, contradicting Claim~\ref{B'-nice-solo}. Thus, $\B'(v_3)\subseteq \{V_3,V_4\}$; in particular, $\B'(v_3)=\{V_3,V_4\}$ since $|\B'(v_3)|\ge2$. 

Observe that if $q'(v_3)\ge7$, then $|\B'(v_3)|\ge r-2-((r-1)-\lceil\frac{7}{2}\rceil)=3$ which contradicts $\B'(v_3)=\{V_3,V_4\}$. So, assume $q'(v_3)=6$. Since $m\ge9$, there exists $z_0\in Q(v_3)\setminus Q'(v_3)$. By symmetry, let $V_r$ be the class of $z_0$ in $\U$; see Figure~\ref{case-a=2-fig}. Since $z_0\in Q(v_3)\setminus Q'(v_3)$, $\B'(v_3)=\{V_3,V_4\}$, and $m\ge r-4$, we have that $z_0$ is adjacent to $v_3$ and at least $r-5$ vertices in $B\setminus(V_3\cup V_4)$. So, $z_0$ has at most 4 neighbors in $V_3\cup V_4$. 

Recall that $Q'(v_2)\subseteq (V_3\cup V_4)$ and $q'(v_2)\ge8$. This implies that there exists at least 4 vertices in $Q'(v_2)$ each of which is nonadjacent to $z_0$. By Pigeonhole, at least two such vertices share a class, say $z_1$ and $z_2$ in $V_3$; see Figure~\ref{case-a=2-fig}. 

Recall that $V_i\in\B'(v_2)$ and $5\le i\le r$. Since $\U$ is strongly connected, there is a $V_i,V_r$-path $P$ in $\HH[\U]$. Now move $v_3$ to $V_3$, move $v_2$ to $V_i$, move $z_1$ to $V_2$, move $z_0$ to $V_2$, and move the witness of each arc in $P$ from its home to its target class. Call the new coloring $\vph'$; see Figure~\ref{case-a=2-fig}. Observe that $V_2$ is still accessible since $v_1$ is movable to $V_1$. Furthermore, $V_3$ is accessible since $z_2$ is movable to $V_2$. Now $a\ge3$ in $\vph'$ which contradicts the maximality of $a$. 

\begin{figure}[!h]
\centering
\begin{subfigure}{0.55\textwidth}
\begin{tikzpicture}[scale=0.7,every node/.style={scale=0.7}]
\draw[thick, dashed, red] (7.5,-4) edge[out=135, in=-135] (5.8,1.6) (7.5,-4) edge[bend left=10] (5.8,1);
\draw[thick] (2.9,0.4) edge[red] (5.8,1.6) (2.9,0.4) edge[red] (5.8,1) (2.9,-0.4) edge[red] (7.5,-4) (3,-1) edge[blue,decoration={markings, mark=at position 0.6 with {\arrow{Stealth[length=10pt,width=10pt]}}},postaction={decorate}] (3,-2.3) (7.5,-2.2) edge[double, double distance= 0.5mm, blue, decoration={markings, mark=at position 0.7 with {\arrow{Stealth[length=12pt,width=12pt]}}},postaction={decorate}] (7.5,-3.3);
\draw[thick] (3.3,0) node[uStyle] {} (2.9,0.4) node[uStyle] {} (2.9,-0.4) node[uStyle] {} (5.8,1.6) node[uStyle] {} (5.8,1) node[uStyle] {} (7.5,-4) node[uStyle] {} (3.7,0) node {$v_1$} (2.5,0.4) node {$v_2$} (2.5,-0.4) node {$v_3$} (6.2,1.6) node {$z_1$} (6.2,1) node {$z_2$} (7.5,-4.3) node {$z_0$} (1.7,0) node {$V_2$} (2,-3) node {$V_1$} (9.2,-1.5) node {$V_i\in\B'(v_2)$} (7.5,-0.2) node {$\B'(v_3)$} (7,1.3) node {$V_3$} (8,1.3) node {$V_4$} (8.5,-4) node {$V_r$} (3,-5.3) node {$\A$} (7.5,-5.3) node {$\B$};
\draw[thick] (3,0) circle (1cm) (3,-3) circle (0.7cm) (6,1.3) circle (0.7cm) (9,1.3) circle (0.7cm) (7.5,-1.5) circle (0.7cm) (7.5,-4) circle (0.7cm);
\draw[thick] (1.2,2.5) rectangle (10.3,-5) (4.5,2.5) -- (4.5,-5);
\draw[thick, decorate, decoration={calligraphic brace, mirror, amplitude=3mm}] (5.3,0.5) -- (9.7,0.5);
\end{tikzpicture}
\end{subfigure}%
\begin{subfigure}{0.35\textwidth}
\begin{tikzpicture}[scale=0.7,every node/.style={scale=0.7}]
\draw[thick] (7.5,-1.5) edge[red] (2.9,-0.4) (7.5,-1.5) edge[red] (5.8,1) (5.8,1.6) edge[red] (2.9,0.4) (3,-1) edge[blue,decoration={markings, mark=at position 0.6 with {\arrow{Stealth[length=10pt,width=10pt]}}},postaction={decorate}] (3,-2.3) (5.358,1.022) edge[blue, decoration={markings, mark=at position 0.5 with {\arrow{Stealth[length=12pt,width=12pt]}}},postaction={decorate}] (3.918,0.398);
\draw[thick] (3.3,0) node[uStyle] {} (2.9,0.4) node[uStyle] {} (2.9,-0.4) node[uStyle] {} (5.8,1.6) node[uStyle] {} (5.8,1) node[uStyle] {} (7.5,-1.5) node[uStyle] {} (3.7,0) node {$v_1$} (2.5,0.4) node {$z_0$} (2.5,-0.4) node {$z_1$} (6.2,1.6) node {$v_3$} (6.2,1) node {$z_2$} (7.5,-1.9) node {$v_2$} (1.7,0) node {$V_2$} (2,-3) node {$V_1$} (9.2,-1.5) node {$V_i\in\B'(v_2)$} (7.5,-0.2) node {$\B'(v_3)$} (7,1.3) node {$V_3$} (8,1.3) node {$V_4$} (8.5,-4) node {$V_r$} (3,-5.3) node {$\A$} (7.5,-5.3) node {$\B$};
\draw[thick] (3,0) circle (1cm) (3,-3) circle (0.7cm) (6,1.3) circle (0.7cm) (9,1.3) circle (0.7cm) (7.5,-1.5) circle (0.7cm) (7.5,-4) circle (0.7cm);
\draw[thick] (1.2,2.5) rectangle (10.3,-5) (4.5,2.5) -- (4.5,-5);
\draw[thick, decorate, decoration={calligraphic brace, mirror, amplitude=3mm}] (5.3,0.5) -- (9.7,0.5);
\end{tikzpicture}
\end{subfigure}
\caption{The red lines and red dashed lines are edges and non-edges of $H$, respectively. The blue lines and blue double lines are arcs and paths of $\HH$, respectively. An example of $\vph$ (left) and $\vph'$ (right) in Case 3.}
\label{case-a=2-fig}
\end{figure}
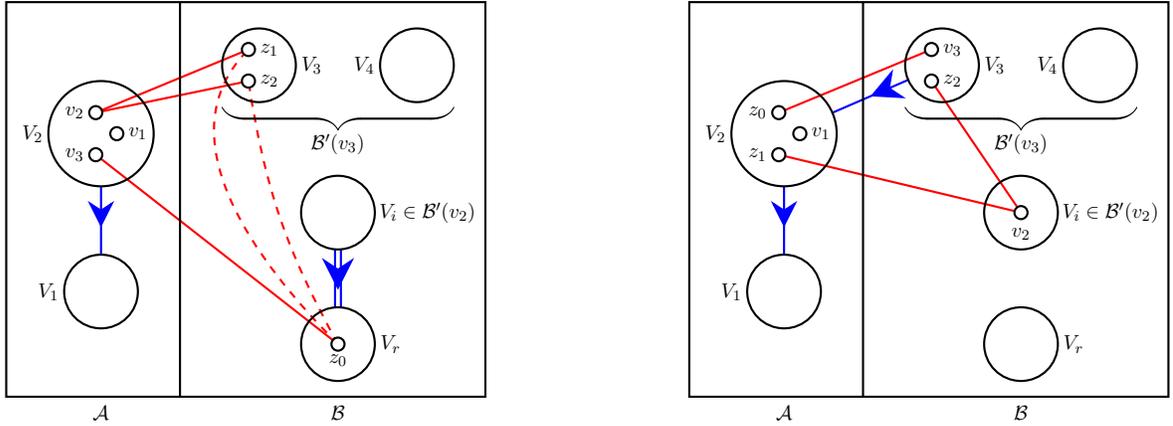

\textbf{Case 4: $\bm{a=1}$.} This implies $b=r-1\ge12$ since $r\ge13$. Observe that $\frac{\sum_{v\in V_1}f_\emptyset(v)}{|V_1|}=\frac{s(r-1)}{s-1}>r-1$. Thus, there exists $v_2\in V_1$ with $f_\emptyset(v_2)>r-1$. 

Note that $N(v_2)\subseteq B$ and $v_2$ is ordinary since $a=1$. Furthermore, $v_2$ is a solo vertex; otherwise, $f_\emptyset(v_2)\le\frac{r}{2}<r-1$, a contradiction. Similar to previous cases, $f_\emptyset(v_2)\le 0.5(|N(v_2)|+q(v_2))$. Since $f_\emptyset(v_2)>r-1$ and $r\ge13$, we have $|N_B(v_2)|=r$ and $q(v_2)\ge r-1\ge12$. This implies $q'(v_2)\ge9$, by Claim~\ref{q-q'}. Furthermore, every $w\in Q'(v_2)$ is not the only neighbor of $v_2$ in its class, by Claim~\ref{two-solo-nbrs}. Thus, $|\B'(v_2)|\ge r-1-(r-\lceil\frac{9}{2}\rceil)=4$. 

Recall from Claim~\ref{strong-comp} that there exists a strong component of $\HH[\B]$, say $\U$, with at least $r-4$ classes. By symmetry, let $V_5,V_6,\dots,V_r\in V(\U)$. Since $|\B'(v_2)|\ge4$, there exists $V_i\in \B'(v_2)$ such that $V_i\in V(\U)$. If some vertex $w$ of $Q'(v_2)$ is in a class of $\U$, then its class is reachable from $V_i$, contradicting Claim~\ref{B'-nice-solo}. Thus, assume $Q'(v_2)\subseteq (V_2\cup V_3\cup V_4)$. 

Let $\W:=\{V_2,V_3,V_4\}$. Note that $\frac{\sum_{v\in V_1}f_{\W}(v)}{|V_1|}=\frac{s(r-4)+3s(0.5)}{s-1}=\frac{s(r-2.5)}{s-1}>r-2.5$. So, there exists $v_3\in V_1$ with $f_{\W}(v_3)>r-2.5$. Furthermore, since $Q'(v_2)\subseteq (V_2\cup V_3\cup V_4)$, $q'(v_2)\ge9$, and $q(v_2)\le r$, we have $f_{\W}(v_2)\le |Q(v_2)\setminus Q'(v_2)|+0.5q'(v_2)+0.5(r-q(v_2))=0.5(q(v_2)-q'(v_2)+r)\le r-4.5<r-2.5$. Thus, $v_3\neq v_2$.

Again, $v_3$ is ordinary since $a=1$. Further, $v_3$ is a solo vertex; otherwise $f_{\W}(v_3)\le\frac{r}{2}<r-2.5$, a contradiction. Let $m:=|Q(v_3)\setminus(V_2\cup V_3\cup V_4)|$. Note that $f_{\W}(v_3)\le m+0.5(r-m)$. If $m\le r-5$, then $f_{\W}(v_3)\le r-5+0.5(5)=r-2.5$, a contradiction. So, assume $m\ge r-4$; thus, $m\ge9$ since $r\ge13$. Now $q'(v_3)\ge6$, by Claim~\ref{q-q'}. Also, recall that every $w\in Q'(v_3)$ is not the only neighbor of $v_3$ in its class, by Claim~\ref{two-solo-nbrs}. Thus, $|\B'(v_3)|\ge r-1-(r-\frac{6}{2})=2$.

If at least 6 vertices of $Q'(v_3)$ are in $V_2\cup V_3\cup V_4$, then $f_{\W}(v_3)\le r-\frac{6}{2}=r-3<r-2.5$, a contradiction. So, at least one vertex $w\in Q'(v_3)$ is in some class of $\U$. If some $V_j\in\B'(v_3)$ is in $V(\U)$, then the class of $w$ is reachable from $V_j$, contradicting Claim~\ref{B'-nice-solo}. Thus, $\B'(v_3)\subseteq\{V_2,V_3,V_4\}$. Since $|\B'(v_3)|\ge2$, by symmetry we assume, $V_2,V_3\in\B'(v_3)$. 

\begin{claim}
\label{subclaim}
$\B'(v_3)=\{V_2,V_3,V_4\}$, i.e., also $V_4\in\B'(v_3)$. 
\end{claim}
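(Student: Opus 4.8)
The plan is to argue by contradiction. Suppose $V_4\notin\B'(v_3)$, so $v_3$ has at least one neighbor in $V_4$. Since we already know $V_2,V_3\in\B'(v_3)$ and $\B'(v_3)\subseteq\{V_2,V_3,V_4\}$, none of $V_5,\dots,V_r$ lies in $\B'(v_3)$, so $v_3$ has a neighbor in each of the $r-3$ classes $V_4,V_5,\dots,V_r$. As $|N_B(v_3)|\le r$, the excess $|N_B(v_3)|-(r-3)$ is at most $3$, so at most three of these classes contain two or more neighbors of $v_3$. Write $S$ for this set of classes, so that $|S|\le3$ and $\sum_{V_j\in S}|N_{V_j}(v_3)|=|S|+(\text{excess})\le|S|+3$.

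Next I would invoke Claims~\ref{two-solo-nbrs} and \ref{q-q'}. Since $v_3$ is ordinary, Claim~\ref{two-solo-nbrs} forces every nice solo neighbor of $v_3$ to lie in a class of $S$, so $q'(v_3)\le\sum_{V_j\in S}|N_{V_j}(v_3)|\le|S|+3\le6$. On the other hand $q(v_3)\ge m\ge r-4\ge7$, so Claim~\ref{q-q'} gives $q'(v_3)\ge q(v_3)-3\ge r-7$. If $r\ge14$ this already contradicts $q'(v_3)\le6$, so we may assume $r=13$, and then every inequality above is tight: $q'(v_3)=6$, $|S|=3$ with each class of $S$ holding exactly two neighbors of $v_3$ and $Q'(v_3)$ consisting precisely of these six vertices, $|N_B(v_3)|=13$, and, combining $q(v_3)\le q'(v_3)+3=9$ with $q(v_3)\ge m\ge9$, also $q(v_3)=m=9$. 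Since $q(v_3)=m$, no solo neighbor of $v_3$ lies in $V_4$; hence $V_4\notin S$, and $v_3$ has exactly one neighbor in $V_4$, which is non-solo.

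We have now pinned $v_3$ down to a rigid configuration: with $r=13$, the vertex $v_3$ has exactly two nice solo neighbors in each of three classes, say $V_5,V_6,V_7$; exactly one non-solo neighbor in $V_4$; exactly one neighbor in each of $V_8,\dots,V_{13}$, of which three are (non-nice) solo neighbors and three are non-solo; and no neighbor in $V_2$ or $V_3$. The final step is to contradict this configuration by a vertex-shuffling argument, in the spirit of Claims~\ref{B'-nice-solo} and \ref{solo-vrt-has-nbr-V1} and of Case~3: move $v_3$ out of $V_1$ (it is movable to $V_2$), raise $V_1\setminus\{v_3\}$ back to size $s-1$ using one of the nice solo neighbors of $v_3$ in $V_5$ (whose partner in $V_5$ then remains movable to the new $V_1$, keeping the class $V_5$ minus that neighbor accessible), and absorb the resulting surplus along a directed path through the strongly connected component $\U\supseteq\{V_5,\dots,V_{13}\}$; one likely also brings $v_2$ and a vertex of $Q(v_3)\setminus Q'(v_3)$ into the shuffle, exactly as in Case~3. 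The result is a coloring with $a\ge2$, contradicting the maximality of $a$.

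I expect this last step to be the main obstacle. Unlike Cases~1--3, when $a=1$ there is no ready-made witness ``$v_1$'' of an arc into $V_1$, so the new accessibility must be manufactured entirely from $v_3$'s (nice) solo neighbors and the internal structure of $\U$; verifying that the needed arcs and directed paths survive the reconfiguration and that all class sizes come out correctly is the delicate part. Everything before it — the degree bookkeeping, the applications of Claims~\ref{two-solo-nbrs} and \ref{q-q'}, and the reduction to the rigid $r=13$ configuration — is routine.
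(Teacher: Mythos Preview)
Your reduction to the rigid $r=13$ configuration is correct and essentially matches the paper's (the paper gets there via the implication ``$q(v_3)\ge10\Rightarrow q'(v_3)\ge7\Rightarrow|\B'(v_3)|\ge3$'', but the endpoint is the same). The gap is in the final shuffle. The move you sketch --- send $v_3$ to $V_2$, bring one nice solo neighbour $w_1$ from $V_5$ into $V_1$ --- leaves the surplus in $V_2$, and $V_2$ need not lie in $\U$, so there is no directed path in $\HH[\B]$ along which to push it. The Case~3 template does not rescue this either: that argument needs the pair $z_1,z_2\in Q'(v_2)$ nonadjacent to some $z_0\in Q(v_3)\setminus Q'(v_3)$ to land in a class of $\B'(v_3)$ (so that $v_3$ can be parked there), but here $\B'(v_3)=\{V_2,V_3\}$ only, and pigeonhole on three classes may drop $z_1,z_2$ into $V_4$. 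Indeed the paper invokes the Case~3 template only \emph{after} the present claim, once $V_4\in\B'(v_3)$ is in hand.

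The paper's actual move does not touch $v_2$ and does not send $v_3$ to $V_2$ or $V_3$. It moves \emph{both} nice solo neighbours $w_1,w_2$ out of $V_5$ into $V_1$ and sends $v_3$ into $\U$, so that the new deficient class sits inside $\U$ where paths are available. Concretely: among $\{V_5,V_6,V_7\}$ pick (say) $V_5$ so that a shortest $V_8,V_5$-path $P$ in $\HH[\U]$ is as short as possible. If no witness along $P$ lies in $N(v_3)$, move $v_3$ to $V_5$; the witness of the last arc of $P$ is then movable to the new small class $(V_5\setminus\{w_1,w_2\})\cup\{v_3\}$, giving $a\ge2$. If some witness $v\in N(v_3)$ has home $V_j$ on $P$, the minimality of $P$ forces $V_j\notin\{V_5,V_6,V_7\}$, so $v$ is $v_3$'s \emph{unique} neighbour in $V_j$; shift the witnesses along the $V_j$-to-$V_5$ subpath (emptying $V_j$ of $v$) and move $v_3$ into $V_j$. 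This path-minimality case split is the missing idea.
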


\begin{proof}[Proof of Claim~\ref{subclaim}]
Assume $V_4\notin\B'(v_3)$. Recall from above that $q(v_3)\ge m\ge9$. If $q(v_3)\ge10$, then $q'(v_3)\ge7$, by Claim~\ref{q-q'}. Moreover, as before, $|\B'(v_3)|\ge r-1-(r-\lceil\frac{7}{2}\rceil)=3$, by Claim~\ref{two-solo-nbrs}. So, $\B'(v_3)=\{V_2,V_3,V_4\}$ since $\B'(v_3)\subseteq\{V_2,V_3,V_4\}$, a contradiction. Thus, assume $q(v_3)=9$. 

Now $q'(v_3)=6$, $m=9$, and $Q(v_3)\subseteq B\setminus(V_2\cup V_3\cup V_4)$, by the definition of $m$. Moreover, $r=13$; otherwise, $m\ge r-4\ge 10$, a contradiction. Also, $|N_{V_4}(v_3)|\le1$; otherwise, $f_{\W}(v_3)\le 2(0.25)+2(0.5)+9=10.5\le r-2.5$, a contradiction. So, $N_{V_4}(v_3)=1$ since $V_4\notin\B'(v_3)$. Further, recall that every $v\in Q'(v_3)$ is not the only neighbor of $v_3$ in its class, by Claim~\ref{two-solo-nbrs}. Since no class of $\U$ is in $\B'(v_3)$ and $q'(v_3)=6$, the rest of the neighbors of $v_3$ are distributed as shown in Figure~\ref{subclaim-fig} (left). In particular, $V_5,V_6,V_7$ each contain a pair of nice solo neighbors of $v_3$.

Let $\D:=\{V_5,V_6,V_7\}$ and let $w_1$ and $w_2$ be the two nice solo neighbors of $v_3$ in $V_5$. Among classes in $\D$, let $V_5$ be a class such that the length of a shortest $V_8,V_5$-path $P$ in $\HH[\U]$ is minimized. Let $V_kV_5$ be the last arc in $P$ with witness $z$.

\begin{figure}[t!]
\centering
\begin{subfigure}{0.5\textwidth}
\begin{tikzpicture}[scale=0.7,every node/.style={scale=0.7}]
\draw[thick] (0.2,-1.25) node[uStyle] {} (-0.2,-1.25) node {$v_3$} (2.2,-0.3) node[uStyle, gray] {} (2.8,-0.3) node[uStyle, gray] {} (2.2,-0.7) node {$w_1$} (2.8,-0.7) node {$w_2$} (3.8,-0.5) node[uStyle, gray] {} (4.2,-0.5) node[uStyle, gray] {} (5.8,-0.5) node[uStyle, gray] {} (6.2,-0.5) node[uStyle, gray] {} (2.5,-2) node[uStyle, black] {} (4,-2) node[uStyle, black] {} (6,-2) node[uStyle, black] {} (2.5,-3.5) node[uStyle] {} (4,-3.5) node[uStyle] {} (6,-3.5) node[uStyle] {} (6,1.5) node[uStyle] {};
\draw[thick] (0,-2.05) node {$V_1$} (1.7,1.5) node {$V_2$} (1.5,-0.5) node {$V_5$} (1.7,-2) node {$V_8$} (1.7,-3.5) node {$V_{11}$} (4.8,1.5) node {$V_3$} (4.8,-0.5) node {$V_6$} (4.8,-2) node {$V_9$} (4.8,-3.5) node {$V_{12}$} (6.8,1.5) node {$V_4$} (6.8,-0.5) node {$V_7$} (6.8,-2) node {$V_{10}$} (6.8,-3.5) node {$V_{13}$};
\draw[thick] (0,-1.25) circle (0.5cm) (2.5,1.5) circle (0.5cm) (4,1.5) circle (0.5) (6,1.5) circle (0.5cm) (2.5,-0.5) circle (0.7cm) (4,-0.5) circle (0.5) (6,-0.5) circle (0.5cm) (2.5,-2) circle (0.5cm) (4,-2) circle (0.5) (6,-2) circle (0.5cm) (2.5,-3.5) circle (0.5cm) (4,-3.5) circle (0.5) (6,-3.5) circle (0.5cm);
\draw[thick] (-1,2.5) rectangle (7.5,-4.5) (1,2.5) -- (1,-4.5) (0,-4.8) node {$\A$} (4.25,-4.8) node {$\B$} (7.8,-2) node {$\U$}; 
\draw[gray] (1,0.5) -- (7.5,0.5);
\end{tikzpicture}
\end{subfigure}%
\begin{subfigure}{0.4\textwidth}
\begin{tikzpicture}[scale=0.7,every node/.style={scale=0.7}]
\draw[thick] (2,0) edge[double, double distance= 0.5mm, blue, decoration={markings, mark=at position 0.7 with {\arrow{Stealth[length=12pt,width=12pt]}}},postaction={decorate}] (3,0) (4,0) edge[double, double distance= 0.5mm, blue, decoration={markings, mark=at position 0.7 with {\arrow{Stealth[length=12pt,width=12pt]}}},postaction={decorate}] (5,0) (6,0) edge[blue, decoration={markings, mark=at position 0.7 with {\arrow{Stealth[length=10pt,width=10pt]}}},postaction={decorate}] (6.8,0);
\draw[thick] (-0.3,0) node[uStyle] {} (-0.7,0) node {$v_3$} (1.5,0) node[uStyle] {} (3.6,0) node[uStyle] {} (3.3,0) node {$v$} (5.6,0) node[uStyle] {} (5.3,0) node {$z$} (7.2,0.2) node[uStyle] {} (7.8,0.2) node[uStyle] {} (7.2,-0.2) node {$w_1$} (7.8,-0.2) node {$w_2$} (-0.5,-0.8) node {$V_1$} (1.5,-0.8) node {$V_8$} (3.5,-0.8) node {$V_j$} (5.5,-0.8) node {$V_k$} (7.5,-1) node {$V_5$} (3.5,1.3) node {$\vph$};
\draw[thick] (-0.5,0) circle (0.5cm) (1.5,0) circle (0.5cm) (3.5,0) circle (0.5cm) (5.5,0) circle (0.5cm) (7.5,0) circle (0.7cm);
\draw[thick] (-1.5,1) rectangle (8.5,-1.5) (0.5,1) -- (0.5,-1.5);
\end{tikzpicture}
\vspace{0.5mm}

\begin{tikzpicture}[scale=0.7,every node/.style={scale=0.7}]
\draw[white] (0,2) node {};
\draw[thick] (0.2,0) edge[bend left=35, blue, decoration={markings, mark=at position 0.55 with {\arrow{Stealth[length=10pt,width=10pt]}}},postaction={decorate}] (7,0);
\draw[thick] (-0.2,0.2) node[uStyle] {} (-0.8,0.2) node[uStyle] {} (-0.2,-0.2) node {$w_1$} (-0.8,-0.2) node {$w_2$} (1.5,0) node[uStyle] {} (3.7,0) node[uStyle] {} (3.3,0) node {$v_3$} (7.6,0) node[uStyle] {} (7.3,0) node {$z$} (-0.5,-1) node {$V_1$} (1.5,-0.8) node {$V_8$} (3.5,-0.8) node {$V_j$} (5.5,-0.8) node {$V_k$} (7.5,-0.8) node {$V_5$} (3.5,-1.8) node {$\vph'$};
\draw[thick] (-0.5,0) circle (0.7cm) (1.5,0) circle (0.5cm) (3.5,0) circle (0.5cm) (5.5,0) circle (0.5cm) (7.5,0) circle (0.5cm);
\draw[thick] (-1.5,1.5) rectangle (8.5,-1.5) (0.5,1.5) -- (0.5,-1.5);
\end{tikzpicture}

\end{subfigure}
\caption{Left: All vertices shown in $\B$ are neighbors of $v_3$ (edges are omitted to avoid clutter). Gray vertices are in $Q'(v_3)$, black vertices are in $Q(v_3)\setminus Q'(v_3)$, and white vertices are not in $Q(v_3)$. Right: An example of $\vph$ (top) and $\vph'$ (bottom) in Claim~\ref{subclaim}.}
\label{subclaim-fig}
\end{figure}
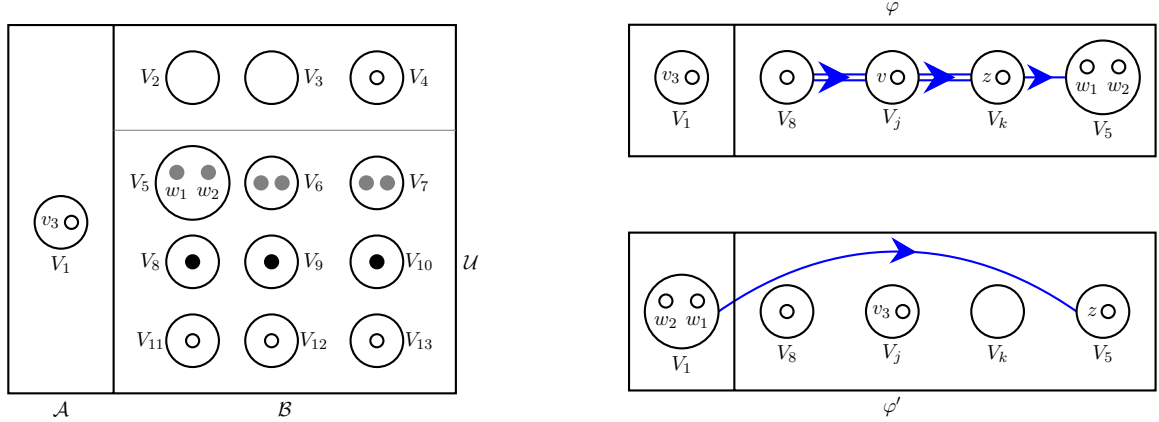

Suppose each witness of an arc in $P$ is not in $N(v_3)$. Now move $w_1$ and $w_2$ to $V_1$ and move $v_3$ to $V_5$. Call the new coloring $\vph'$. Let $V'_5:=(V_5\setminus\{w_1,w_2\})\cup\{v_3\}$. Note that $|V'_5|=s-1$ and $V_k$ is accessible with respect to $V'_5$ since $z$ is movable to $V'_5$; this contradicts the maximality of $a$. 

Suppose instead that there exists $V_j$ in $V(P)$ with witness $v\in N(v_3)$ of arc $V_jV_{j+1}$; see Figure~\ref{subclaim-fig} (top right). Observe that $v\notin Q'(v_3)$; otherwise, $V_j\in\D$ and has a shorter path from $V_8$, which contradicts the choice of $V_5$. Move the witness of each arc in the $V_jV_5$-subpath from its home to its target class, then move $v_3$ to $V_j$ and move $w_1$ and $w_2$ to $V_1$. Call the new coloring $\vph'$; see Figure~\ref{subclaim-fig} (bottom right). Let $V'_5:=(V_5\setminus\{w_1,w_2\})\cup\{z\}$. Now $|V'_5|=s-1$ and $(V_1\setminus\{v_3\})\cup\{w_1,w_2\}$ is accessible with respect to $V'_5$ simply by moving $w_1$ or $w_2$ to $V'_5$; this contradicts the maximality of $a$.
\end{proof} 

\begin{figure}[h!]
\centering
\begin{subfigure}{0.55\textwidth}
\begin{tikzpicture}[scale=0.7,every node/.style={scale=0.7}]
\draw[thick, dashed, red] (7.5,-4) edge[bend left=45] (5.4,1.6) (7.5,-4) edge[bend left=25] (5.4,1);
\draw[thick] (2.9,-1.1) edge[red] (5.4,1.6) (2.9,-1.1) edge[red] (5.4,1) (2.9,-1.9) edge[red] (7.5,-4) (7.5,-2.2) edge[double, double distance= 0.5mm, blue, decoration={markings, mark=at position 0.7 with {\arrow{Stealth[length=12pt,width=12pt]}}},postaction={decorate}] (7.5,-3.3);
\draw[thick] (3.3,-1.5) node[uStyle] {} (2.9,-1.1) node[uStyle] {} (2.9,-1.9) node[uStyle] {} (5.4,1.6) node[uStyle] {} (5.4,1) node[uStyle] {} (7.5,-4) node[uStyle] {} (3.7,-1.5) node {$v_1$} (2.5,-1.1) node {$v_2$} (2.5,-1.9) node {$v_3$} (5.8,1.6) node {$z_1$} (5.8,1) node {$z_2$} (7.5,-4.3) node {$z_0$} (1.7,-1.5) node {$V_1$} (9.2,-1.5) node {$V_i\in\B'(v_2)$} (7.5,-0.2) node {$\B'(v_3)$} (5.6,2.3) node {$V_2$} (7.5,2.3) node {$V_3$} (9.4,2.3) node {$V_4$} (8.5,-4) node {$V_r$} (3,-5.3) node {$\A$} (7.5,-5.3) node {$\B$};
\draw[thick] (3,-1.5) circle (1cm) (5.6,1.3) circle (0.7cm) (9.4,1.3) circle (0.7cm) (7.5,1.3) circle (0.7cm) (7.5,-1.5) circle (0.7cm) (7.5,-4) circle (0.7cm);
\draw[thick] (1.3,2.7) rectangle (10.4,-5) (4.5,2.7) -- (4.5,-5);
\draw[thick, decorate, decoration={calligraphic brace, mirror, amplitude=3mm}] (5.3,0.5) -- (9.7,0.5);
\end{tikzpicture}
\end{subfigure}%
\begin{subfigure}{0.35\textwidth}
\begin{tikzpicture}[scale=0.7,every node/.style={scale=0.7}]
\draw[thick] (7.5,-1.5) edge[red, bend left=25] (2.9,-1.9) (7.5,-1.5) edge[red] (5.4,1) (2.9,-1.1) edge[red] (5.4,1.6) (5.124,0.787) edge[blue, decoration={markings, mark=at position 0.8 with {\arrow{Stealth[length=10pt,width=10pt]}}},postaction={decorate}] (3.68,-0.767);
\draw[thick] (3.3,-1.5) node[uStyle] {} (2.9,-1.1) node[uStyle] {} (2.9,-1.9) node[uStyle] {} (5.4,1.6) node[uStyle] {} (5.4,1) node[uStyle] {} (7.5,-1.5) node[uStyle] {} (3.7,-1.5) node {$v_1$} (2.5,-1.1) node {$z_0$} (2.5,-1.9) node {$z_1$} (5.8,1.6) node {$v_3$} (5.8,1) node {$z_2$} (7.5,-1.8) node {$v_2$} (1.7,-1.5) node {$V_1$} (9.2,-1.5) node {$V_i\in\B'(v_2)$} (7.5,-0.2) node {$\B'(v_3)$} (5.6,2.3) node {$V_2$} (7.5,2.3) node {$V_3$} (9.4,2.3) node {$V_4$} (8.5,-4) node {$V_r$} (3,-5.3) node {$\A$} (7.5,-5.3) node {$\B$};
\draw[thick] (3,-1.5) circle (1cm) (5.6,1.3) circle (0.7cm) (9.4,1.3) circle (0.7cm) (7.5,1.3) circle (0.7cm) (7.5,-1.5) circle (0.7cm) (7.5,-4) circle (0.7cm);
\draw[thick] (1.3,2.7) rectangle (10.4,-5) (4.5,2.7) -- (4.5,-5);
\draw[thick, decorate, decoration={calligraphic brace, mirror, amplitude=3mm}] (5.3,0.5) -- (9.7,0.5);
\end{tikzpicture}
\end{subfigure}
\caption{The red lines are edges of $H$ and the blue lines and blue double lines are arcs and paths of $\HH$, respectively. An example of $\vph$ (left) and $\vph'$ (right) in Case 4.}
\label{case-a=1-fig}
\end{figure}
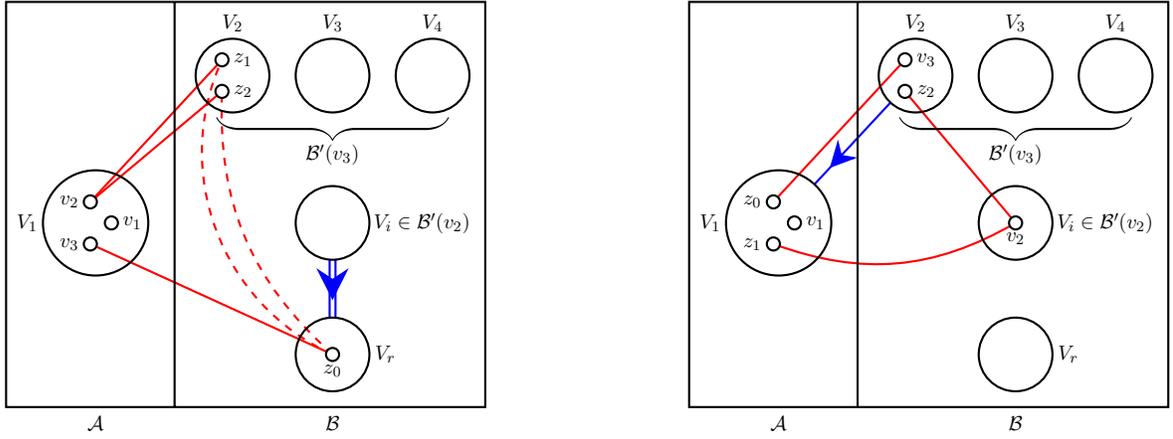

Observe that if $q'(v_3)\ge9$, then $|\B'(v_3)|\ge r-1-(r-\lceil\frac{9}{2}\rceil)=4$ which contradicts $\B'(v_3)=\{V_2,V_3,V_4\}$. So, assume $6\le q'(v_3)\le 8$. Since $m\ge9$, there exists $z_0\in Q(v_3)\setminus Q'(v_3)$. By symmetry, let $V_r$ be the class of $z_0$ in $\U$; see Figure~\ref{case-a=1-fig}. Since $z_0\in Q(v_3)\setminus Q'(v_3)$, $\B'(v_3)=\{V_2,V_3,V_4\}$, and $m\ge r-4$, we have that $z_0$ is adjacent to $v_3$ and at least $r-5$ vertices in $B\setminus(V_2\cup V_3\cup V_4)$. So, $z_0$ has at most 4 neighbors in $V_2\cup V_3\cup V_4$. 

Recall that $Q'(v_2)\subseteq(V_2\cup V_3\cup V_4)$ and $q'(v_2)\ge9$. This implies that there exist at least 5 vertices in $Q'(v_2)$ each of which is nonadjacent to $z_0$. By Pigeonhole, at least two such vertices share a class, say $z_1$ and $z_2$ in $V_2$; see Figure~\ref{case-a=1-fig}. 

Recall, from the third paragraph of this case, that $V_i\in\B'(v_2)$ and $5\le i\le r$. Since $\U$ is strongly connected, there is a $V_i,V_r$-path $P$ in $\HH[\U]$. Now move $v_3$ to $V_2$, move $v_2$ to $V_i$, move $z_0$ and $z_1$ to $V_1$, and move the witness of each arc in $P$ from its home to its target class. Call the new coloring $\vph'$; see Figure~\ref{case-a=1-fig}. Observe that $V_2$ is accessible since $z_2$ is movable to $V_1$. Now $a\ge2$ in $\vph'$ which contradicts the maximality of $a$. 
\end{proof}

\bibliographystyle{plainurl}
\footnotesize{
\bibliography{references}
}

\end{document}